\newtheorem{lemma}{Lemma}[section]
\newtheorem{theorem}[lemma]{Theorem}
\newtheorem{proposition}[lemma]{Proposition}
\newtheorem{corollary}[lemma]{Corollary}
\newtheorem{conjecture}[lemma]{Conjecture}
\newcommand {\qed}{\rule{2mm}{2mm}\medskip}
\newenvironment {proof}{\noindent{\bf Proof:}~}{~\qed}
\begin{document}


\begin{center} 
{\bf \LARGE Heffter Arrays and \\
\vspace*{1.0ex}
Biembedding Graphs on Surfaces}
\end{center}

\vspace*{2.0ex}

\begin{center}
   Dan Archdeacon \\
   Dept. of Math. and Stat. \\
   University of Vermont \\
   Burlington, VT 05405 \ \ USA\\
   {\tt dan.archdeacon@uvm.edu}
\end{center}


\vspace*{1.0ex}
\begin{center}
\today
\end{center}


\begin{abstract} A Heffter array is an $m \times n$ matrix with nonzero entries from $\mathbb{Z}_{2mn+1}$ such that {\em i)} every row and column sum to 0, and {\em ii)} no element from $\{x,-x\}$ appears twice. We construct some Heffter arrays. These arrays are used to build current graphs used in topological graph theory. In turn, the current graphs are used to embed the complete graph $K_{2mn+1}$ so that the faces can be 2-colored, called a biembedding. Under certain conditions each color class forms a cycle system. These generalize biembeddings of Steiner triple systems. We discuss some variations including Heffter arrays with empty cells, embeddings on nonorientable surfaces, complete multigraphs, and using integer in place of modular arithmetic.
\end{abstract}


\section {Introduction}\label{introduction}

We study a relation between design theory, graph theory, and maps on surfaces. From design theory a Heffter system is used to construct a cyclic $k$-cycle system. We introduce orthogonal Heffter systems and represent them as a Heffter array. The array is related to a current assignment on the complete bipartite graph $K_{m,n}$. The current graph with certain conditions is then used to construct an orientable embedding of the complete graph $K_{2mn+1}$ that is face 2-colorable, the boundaries of each color class forming a cycle system. A similar theorem is given for embeddings in nonorientable surfaces. 

{\em Heffter's First Difference Problem} \cite{H} asks if the numbers from 1 to $(m-1)/2$ can be partitioned into $(m-1)/6$ triples $(x,y,z)$ such that either $x+y=z$ or $x+y+z = m$. Heffter used this partition to construct a {\em Steiner triple system}, {\em STS($m$)},  a collection of triples from an $m$-set that collectively contain every pair exactly once \cite{CR}. This corresponds to a set of 3-cycles whose edges partition $E(K_m)$. An {\em $s$-cycle system} partitions $E(K_m)$ into $s$-cycles. Buratti and Del Fra \cite{BD} proved the existence of $k$-cycle systems of $K_m$ having a cyclic action on the parts whenever $m \equiv 1 \pmod{2k}$. 

A Heffter array is an $m \times n$ array with non-zero entries from $\mathbb{Z}_{2mn+1}$ such that the entries are all distinct up to sign and such that each row and column sum to 0. We explore Heffter arrays in Section \ref{heffter}. We also define two properties of orderings $\omega_r$ and $\omega_c$ on the cells of a Heffter array. 

A current graph is an embedded graph where each directed edge has been assigned an element from a fixed current group. Under some special conditions current graphs can be used to construct embeddings of complete graph. These embeddings were first used in the solution of the Map Color Theorem \cite{R}. Details of this relation are given in \cite{GT}; we give a brief explanation in Section \ref{current_graphs}. The relation between Heffter arrays and current graphs are described in Section \ref{relation}. One main result is the following whose proof is given in Section \ref{relation}. 

\begin{theorem}\label{main} Given a Heffter array $H(m,n;s,t)$ with compatible orderings $\omega_r$ on $D(m,s)$ and $\omega_c$ on $D(n,t)$, there exists an orientable embedding of $K_{2ms+1}$ such that every edge is on a face of size $s$ and a face of size $t$. Moreover, if $\omega_r$ and $\omega_c$ are both simple, then all faces are simple cycles. \end{theorem}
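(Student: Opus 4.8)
The plan is to translate the combinatorial data of a Heffter array into a current graph on $K_{m,n}$ and then invoke the standard current-graph machinery to produce the embedding. Recall from the setup that a Heffter array $H(m,n;s,t)$ has each row summing to zero and each column summing to zero, with all $mn$ entries distinct up to sign; these entries, together with their negatives, exhaust the nonzero elements of $\mathbb{Z}_{2mn+1}$. First I would construct the current graph: take $K_{m,n}$ with one vertex for each row and one for each column, place the edge joining row $i$ to column $j$ exactly when cell $(i,j)$ is occupied, and assign to that edge the current equal to the entry in cell $(i,j)$, oriented (say) from the row-vertex to the column-vertex. The row-sum and column-sum conditions are precisely Kirchhoff's current law at every vertex, which is the first hypothesis the current-graph construction requires.

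Next I would use the orderings $\omega_r$ and $\omega_c$ to define the rotation system (the embedding) of this current graph. The ordering $\omega_r$ on $D(m,s)$ should prescribe the cyclic order of edge-ends at each row-vertex, and $\omega_c$ on $D(n,t)$ the cyclic order at each column-vertex; the compatibility hypothesis is exactly what guarantees these local rotations fit together into a single embedded graph whose faces behave correctly. I would then verify that this embedded current graph satisfies the remaining conditions of the standard theorem (as in \cite{GT}): that the currents at each vertex satisfy KCL, that each element of $\mathbb{Z}_{2mn+1} \setminus \{0\}$ (up to sign) appears exactly once as a current (this is the distinctness-up-to-sign condition), and that tracing the faces produces log sequences generating the group. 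The payoff of the general theory is that such a current graph induces a triangular-free embedding of the Cayley-type graph on $\mathbb{Z}_{2ms+1}$ vertices, which here is $K_{2ms+1}$, with the face sizes governed by the lengths of the rotation orbits: faces of size $s$ coming from the row structure and faces of size $t$ from the column structure. Counting, the index relation $2mn+1 = 2ms+1$ forces the single vertex count, and the edge currents bijectively generate all the edges of $K_{2ms+1}$ at the single derived-graph vertex.

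The two face-size claims and the simple-cycle refinement are where the orderings do their work. I would argue that following a face boundary of the current graph corresponds to reading off a row (respectively column) of the array cyclically, so the face has length $s$ or $t$ according to which vertex type it revolves around; the partial sums along such a boundary are exactly the vertices visited in the derived embedding, so each edge of $K_{2ms+1}$ lies on one face of size $s$ and one of size $t$. For the final sentence, I would show that the simplicity of $\omega_r$ and $\omega_c$ means no partial sum along a face boundary repeats before the full cycle closes, so the corresponding face in the derived embedding is a genuine simple cycle rather than a closed walk with repeated vertices.

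The main obstacle I expect is pinning down the exact meaning of \emph{compatible} and \emph{simple} orderings and verifying that these conditions translate precisely into the hypotheses of the current-graph theorem from \cite{GT} — in particular that compatibility yields a single face-tracing orbit of the right kind at the derived vertex and that simplicity controls repeated partial sums. The counting and Kirchhoff verifications are routine; the delicate point is the faithful dictionary between the array orderings and the rotation/face structure of the current graph, so I would develop that correspondence carefully before invoking the black-box embedding theorem.
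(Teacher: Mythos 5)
Your overall route is the same as the paper's: convert the array into a unique-current assignment on an $(s,t)$-biregular bipartite graph (the paper's Proposition \ref{array_to_current}), check KCL via the zero row and column sums, and then invoke the standard derived-embedding theorem (the paper's Theorem \ref{big_theorem}) to get the embedding of the complete graph, with simplicity of the orderings giving simple-cycle faces. But two points need repair. First, an arithmetic slip that matters because the theorem allows empty cells: a Heffter array $H(m,n;s,t)$ has $ms = nt$ filled cells whose entries form a half-set of $\mathbb{Z}_{2ms+1}$, not $mn$ entries in $\mathbb{Z}_{2mn+1}$, and your asserted identity $2mn+1 = 2ms+1$ is false unless the array is full --- for the $H(6,12;8,4)$ of Figure \ref{unbalanced} one has $mn = 72$ but $ms = 48$. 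The current group, and hence the derived complete graph, is on $\mathbb{Z}_{2ms+1}$ throughout. Relatedly, you state the current-graph duality backwards in one sentence: the faces of the derived embedding arise from the \emph{vertices} of the current graph (the local rotation at a row vertex reads off a row of the array, and its partial sums trace a face of size $s$), while the single \emph{face} of the current graph supplies the rotation at each vertex of the derived graph; this is in fact why monofaciality is needed, since the log of the one face must cyclically order all $2ms$ nonzero currents.

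Second, and more substantively, the step you defer as ``the main obstacle'' --- that compatibility of $\omega_r$ and $\omega_c$ produces a monofacial embedding of the current graph --- is precisely the nontrivial content of Proposition \ref{array_to_current}, and your proposal asserts it without supplying a mechanism. The paper's device is worth having: since currents are unique, one may describe the rotation as a permutation of nonzero group elements rather than of directed edges, setting $\rho(a) = \omega_r(a)$ for $a$ in the half-set $L$ and $\rho(a) = \tau \circ \omega_c \circ \tau(a)$ for $a \in -L$, where $\tau(a) = -a$. One then computes $(\rho \circ \tau)^2(a) = \omega_r \circ \omega_c(a)$ for $a \in L$, so compatibility (that $\omega_r \circ \omega_c$ is a single cycle on $L$) forces $\rho \circ \tau$ to act as a single cycle on all of $\mathbb{Z}_{2ms+1} \setminus \{0\}$, i.e., the embedding is monofacial. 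Without some such computation your plan at this point merely restates the hypothesis; with it, your proposal coincides with the paper's proof.
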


A {\em biembedding} of the complete graph $K_m$ is one that is face 2-colorable. We are particularly interested when the face boundaries of the first color class form an $s$-cycle system and those of the other color class form a $t$-cycle system. These have most commonly been studied in the case $s = t = 3$, that is, both color classes are {\em STS$(m)$}'s. Triangular biembeddings were shown to exist in \cite{R} for all $n \equiv 5 \pmod{12}$ and in \cite{Y} for all $n \equiv 7 \pmod{12}$, the two necessary cases. Biembeddings of Steiner triple systems have been widely studied \cite{GK1,GK2} particularly for small values \cite{GGK}. Grannell and Griggs \cite{GG} give a very nice survey. McCourt \cite{M} has studied biembeddings where one color class gives and {\em STS} and the other half give a decomposition of $E(K_m)$ into Hamilton cycles. Brown \cite{B} has a class of embeddings where one color class is triangles and the other quadrilaterals.

Section \ref{nonorientable} discusses weak Heffter arrays and their use to construct biembeddings on nonorientable surfaces. Section \ref{conclusion} closes with some directions for future research, several of which will be the subject of subsequent papers. 

\section {Heffter systems and Heffter arrays}\label{heffter}

Let $\mathbb Z_m$ be the cyclic group of odd order $m$ whose elements are denoted 0 and $\pm i$ where $i = 1,\dots,(m-1)/2$. A {\em half-set} $L$ is a subset of $(m-1)/2$ nonzero elements that contains exactly one of each pair $\{x,-x\}$. A {\em Heffter system}, $D(m,k)$, is a partition of $L$ into parts of size $k$ such that the elements in each part sums to 0 modulo $m$. {\em Heffter's First Difference Problem} \cite{H} asks if the numbers from 1 to $(m-1)/2$ can be partitioned into $(m-1)/6$ triples $(x,y,z)$ such that either $x+y=z$ or $x+y+z = m$. This is equivalent to finding a $D(m,3)$. 

Two Heffter systems $D_m = D(2mn+1,m)$ and $D_n = D(2mn+1,n)$ on the same half-set $L$ are {\em orthogonal} if each $m$-set of $D_m$ intersects each $n$-set of $D_n$ in a single element. A {\em Heffter array} $H(m,n)$ is an $m \times n$ array whose rows form a $D(2mn+1,n)$ and whose columns form a $D(2mn+1,m)$; we call these the {\em row} and {\em column Heffter systems} respectively. A Heffter array $H(m,n)$ is equivalent to a pair of orthogonal Heffter systems: cell $a_{i,j}$ contains the common element in the $i^{th}$ part of the row system and the $j^{th}$ part of the column system. Figure \ref{three_by_four} shows a Heffter array $H(3,4)$. 

\begin{figure}[htb]
\begin{center} 
$$ 
\begin{array}{|c|c|c|c|} \hline
1 & -2 & -10 & 11 \\ \hline
-8 & 6 & -3 & 5 \\ \hline
7 & -4 & -12 & 9 \\ \hline
\end{array}
$$\end{center} 
\caption{\label{three_by_four} A Heffter array $H(3,4)$ over $\mathbb Z_{25}$}
\end{figure}

Let $A$ be a subset of $\mathbb Z_{m}$ with $\sum_{a \in A} a \equiv 0 \pmod{m}$ such that no pair $\{x,-x\}$ is a subset of $A$. Consider a cyclic ordering $(a_1,\dots,a_k)$ of the elements in $A$ and let $s_i = \sum_{j=1}^i a_j$. The ordering is {\em simple} if $s_i \neq s_j$ for $i \neq j$. Equivalently, the cyclic ordering is simple if there is no consecutive subsequence of elements that sum to 0. A Heffter system $D(m,k)$ has a {\em simple ordering} if and only if each part has a simple ordering.

\begin{proposition}\label{cycle_decomposition} An ordered Heffter system $D(m,k)$ forms a decomposition of $E(K_m)$ into closed trails of length $k$. Moreover, if the ordering is simple, then its corresponding decomposition is a $k$-cycle system. 
\end{proposition}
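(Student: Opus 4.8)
The plan is to use the classical method of differences from cyclic design theory. I would place the vertices of $K_m$ at the elements of $\mathbb{Z}_m$ and classify each edge $\{u,v\}$ by its \emph{difference}, the unordered pair $\{v-u, u-v\}$. Since $m$ is odd, for each value $d \in \{1,\dots,(m-1)/2\}$ there are exactly $m$ edges of difference $d$, namely $\{i,\, i+d\}$ for $i \in \mathbb{Z}_m$, and these $(m-1)/2$ difference classes partition $E(K_m)$. The half-set $L$ contains exactly one of each pair $\{x,-x\}$, so the elements of $L$ are in bijection with the difference classes; this is the fact that will make the bookkeeping balance.

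Next, for each part with cyclic ordering $(a_1,\dots,a_k)$ and partial sums $s_i = \sum_{j=1}^{i} a_j$ (so that $s_0 = 0$ and $s_k = 0$ because the part sums to $0$ modulo $m$), I would form the base closed walk $s_0, s_1, \dots, s_{k-1}, s_k$. This is a closed walk of length $k$ whose $i$-th edge $\{s_{i-1}, s_i\}$ has difference $\pm a_i$. Because the elements of a single part are distinct up to sign, the $k$ edges lie in $k$ distinct difference classes and so are distinct edges; hence each base walk is already a closed trail, with no appeal to the global count needed.

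I would then take all $m$ translates $g + W$ (adding a fixed $g \in \mathbb{Z}_m$ to every vertex) of each base trail $W$. A quick count gives $(m-1)/(2k)$ parts, each contributing $k$ edges in each of $m$ translates, for a total of $m(m-1)/2 = |E(K_m)|$ edges. The crux, and the step I expect to be the main obstacle, is verifying that this count is achieved with no repetition, i.e.\ that the translates cover each edge exactly once. For this I would fix a difference $d$: the element of $L$ representing $d$ occurs in a unique part at a unique position $i$, and the translates of its edge are $\{s_{i-1}+g,\ s_i+g\}$ for $g \in \mathbb{Z}_m$, which run over all $m$ edges of difference $d$, each exactly once, as $g$ ranges over $\mathbb{Z}_m$. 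Summing over the difference classes shows the translated trails partition $E(K_m)$, giving the decomposition into closed trails of length $k$.

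Finally, for the ``moreover'' I would observe that a closed trail of length $k$ is a $k$-cycle precisely when its vertices $s_0, s_1, \dots, s_{k-1}$ are pairwise distinct, and that $s_i = s_j$ with $0 \le i < j \le k-1$ is equivalent to the consecutive block $a_{i+1} + \dots + a_j$ (read cyclically, using $s_0 = s_k$) summing to $0$. The simplicity hypothesis rules out exactly these zero-sum consecutive subsequences, so each base walk is a $k$-cycle; since translation by $g$ preserves cycles, every trail in the decomposition is a $k$-cycle, and the decomposition is a $k$-cycle system.
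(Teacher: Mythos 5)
Your proposal is correct and follows essentially the same route as the paper: form the base closed trail $(0,s_1,\dots,s_{k-1})$ from the partial sums of each part, develop it modulo $m$, use the half-set property to see each difference class is covered exactly once, and invoke simplicity of the ordering to upgrade trails to cycles. You merely fill in details the paper leaves implicit (the count of difference classes, the verification that translates hit each edge of a fixed difference exactly once, and the observation that distinct differences already make the base walk a trail), which is a welcome but not substantively different elaboration.
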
 

\begin{proof} Let $\{a_1,\dots,a_k\}$ be a part of $D(m,k)$ under any ordering. Form a closed trail $(0,s_1,s_2,\dots,s_k)$ in a complete graph $K_m$ with vertex set $\mathbb Z_m$. Develop this trail modulo $m$ and do the same for all other parts of $D(m,k)$. Since each pair $\{x,-x\}$ has exactly one element in $D(m,k)$, each difference appears once. Hence these closed trails partition $E(K_m)$. If the ordering on $D(m,k)$ is simple, then each of these trails are simple cycles. \end{proof} 

Let $D_1 = D(2mn+1,s)$, $D_2 = D(2mn+1,t)$ be two orthogonal Heffter systems with orderings $\omega_1$, $\omega_2$ respectively. The orderings are {\em compatible} if their composition $\omega_1 \circ \omega_2$ is a cyclic permutation on the half-set. The importance of compatible orderings will be examined in Section \ref{relation} when we relate Heffter arrays and current graphs. 

A variation of Heffter arrays allows for some cells to be empty. Two Heffter systems $D(2ms+1,s)$ and $D(2nt+1,t)$ on the same half-set of order $ms=nt$ are {\em sub-orthogonal} if each $s$-set of $D(2ms+1,s)$ intersects each $t$-set of $D(2nt+1,t)$ in at most one element. As before, form an $m \times n$ array $H(m,n;s,t)$ where $a_{i,j}$ is the common element in the $i^{th}$ part of $D(2ms+1,s)$ and the $j^{th}$ part of $D(2nt+1,t)$, if any, and the cell is empty otherwise. Necessary conditions for the existence of an $H(m,n;s,t)$ are $ms = nt$, $3 \le s \le m$, and $3 \le t \le n$. Figure \ref{unbalanced} gives an example of a $H(6,12;8,4)$.

\begin{figure}[htb]
\begin{center} 
$$ 
\begin{array}{|c|c|c|c|c|c|c|c|c|c|c|c|}
\hline
&& -1 & 2 &   5 & -6 &&& -25 & 26 & 29 & -30 \\ \hline
&& 3 & -4 & -7 &   8 &&& 27 & -28 & -31 & 32 \\ \hline
9 & -10 &&& -13 & 14 & 33 & -34 &&& -37 & 38 \\ \hline
-11 & 12&&& 15 & -16 & -35 & 36 &&& 39 & -40 \\ \hline
-17 & 18 & 21 & -22 &&& -41 & 42 & 45 & -46 && \\ \hline
 19 & -20 & -23 & 24 &&& 43 &-44 & -47 & 48 && \\ \hline
\end{array}
$$
\end{center} 
\caption{\label{unbalanced} A Heffter array $H(6,12;8,4)$}
\end{figure}

If a Heffter array $H(m,n;s,t)$ is square, i.e., $m=n$, then necessarily $s = t$. In this case we denote the square by $H(n;k)$. The commonality in the notation is that parameters before the 
semicolon refer to sides of the squares, those after to the number of filled cells in a row or column. Figure \ref{square_with_empty} gives a $H(5;4)$. Square arrays with empty cells are studied in \cite{ADDY}. 

\begin{figure}[htb]
\begin{center} 
$$ 
\begin{array}{|c|c|c|c|c|} \hline
 & 17 & -8 & -14&5  \\ \hline
1& & 18& -9 &-10\\ \hline
-6 & 2 &  &19 &-15\\ \hline
-11&-12&3&&20 \\ \hline
16&-7&-13&4& \\ \hline
\end{array}
$$\end{center} 
\caption{\label{square_with_empty} A Heffter array $H(5;4)$}
\end{figure}

Let $H(m_1+m_2,n_1+n_2;s,t)$ be a Heffter array. Suppose that the rows and columns of $H$ can be permuted such that each nonempty cell $a_{i,j}$ has either $i \le m_1$ and $j \le n_1$, or it has $i > m_1$ and $j > n_1$. Then the array is called {\em block diagonal}. Constructing block diagonal arrays is convenient and powerful, but they are not suited for the application to graph embeddings and so are sometimes avoided. 

\section {Orientable embeddings and current graphs}\label{current_graphs}

In this section we describe the use of rotations to describe a cellular embedding of a graph on a fixed orientable surface. We also define current graphs and their usefulness in embedding complete graphs. 

\subsection{Orientable embeddings}

Consider a graph $G$ and for every edge $e$ let $e^+$ and $e^-$ denote its two possible directions. Let $D(G)$ be the set of all directed edges, so $|D(G)| = 2 |E(G)|$, and define $\tau$ as the involution swapping $e^+$ and $e^-$ for every $e$. Let $D_v = \{ (v,u) \in D(G)\}$ denoted the edges directed out of $v$. A {\em local rotation} $\rho_v$ is a cyclic permutation of $D_v$. If we select a local rotation for each vertex, then collectively they form a {\em rotation} $\rho$ of $D(G)$. The orbits of $\rho$ correspond bijectively to the set of local rotations on $V(G)$. The proof of the following is omitted; see \cite{GT,MT} for details.

\begin{theorem} A rotation on $G$ is equivalent to a cellular embedding of $G$ in an oriented surface. The faces boundaries of the embedding corresponding to $\rho$ are the orbits of $\rho \circ \tau$. \end{theorem}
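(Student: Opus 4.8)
The plan is to prove the equivalence by exhibiting mutually inverse constructions and then verifying that face boundaries are governed by $\rho \circ \tau$. For the direction from a rotation to an embedding, I would build the surface as a \emph{band decomposition} (ribbon graph). Replace each vertex $v$ by a closed disk $B_v$ and, on its boundary, mark $|D_v|$ disjoint arcs---one for each directed edge out of $v$---placed in the cyclic order prescribed by the local rotation $\rho_v$. Replace each edge $e = uv$ by a rectangular band, gluing one short side to the arc for $(u,v)$ on $B_u$ and the other to the arc for $(v,u)$ on $B_v$. The key point is to perform every gluing \emph{without a twist}, consistently with a fixed orientation of the vertex disks; this is exactly what forces the resulting compact surface-with-boundary $S'$ to be orientable. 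By construction $G$ is a spine (deformation retract) of $S'$, so its vertices and edges sit in $S'$ as prescribed.

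The crux of the argument is to identify the boundary circles of $S'$. I would trace a boundary component that runs along a band corresponding to the directed edge $(u,v)$: traversing this band brings the trace to $B_v$, where the corresponding out-edge is $(v,u)$, which is the involution $\tau$ applied to $(u,v)$; then following the boundary arc of $B_v$ to the next attached band advances to $\rho_v\big((v,u)\big)$, the local rotation $\rho$. Thus each step of boundary tracing applies $\rho \circ \tau$, so the boundary components of $S'$ are in bijection with the orbits of $\rho \circ \tau$, each circle reading off exactly the corresponding closed walk in $G$. Capping every boundary circle with a $2$-cell then yields a closed orientable surface $S$ in which $G$ is cellularly embedded, with faces bounded precisely by the orbits of $\rho \circ \tau$.

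For the converse, given a cellular embedding of $G$ in an oriented surface, the orientation induces at each vertex a cyclic order on $D_v$, defining a local rotation and hence a rotation $\rho$; walking along a face with the surface on a fixed side and repeating the local analysis above recovers the orbits of $\rho \circ \tau$ as the face boundaries. Finally I would check that the two constructions are mutually inverse up to an orientation-preserving homeomorphism fixing $G$, establishing the equivalence. The main obstacle is the careful bookkeeping in the boundary-tracing step: one must fix the orientation and side conventions once and verify that the geometric transition across a band and around a vertex disk matches $\tau$ followed by $\rho$ exactly, rather than some inverse or reversed composition, and separately confirm that the untwisted gluing condition is genuinely equivalent to orientability of $S'$. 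The well-definedness of $S$ up to homeomorphism---its independence from the sizes of the disks and bands---is routine once these conventions are pinned down.
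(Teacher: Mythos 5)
Your band-decomposition (ribbon graph) argument is correct and is precisely the standard proof of this equivalence; the paper itself omits the proof, deferring to Gross--Tucker and Mohar--Thomassen, where exactly this construction---vertex disks, untwisted bands in the order $\rho_v$, boundary tracing via $\rho \circ \tau$, and capping with $2$-cells---is carried out. So your proposal matches the intended (cited) proof, including correctly identifying the orientation-convention bookkeeping as the only delicate point.
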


Calculating $\rho \circ \tau$ is called the {\em face-tracing algorithm}. Knowing the number of faces allows you calculate the genus $g$ of the surface using Euler's formula $|V| - |E| + |F| = 2 - 2 g$. 
 
We are especially interested in {\em monofacial} embeddings, those with a single face. By Euler's formula if a graph has a monofacial embedding, then $|V| \not \equiv |E| \pmod 2$. However, this necessary condition is not sufficient. The following is a special case of Xuong's Theorem \cite{X}. The proof provides an algorithm for calculating a rotation yielding the monofacial embedding. 

\begin{theorem}\label{xuong} A graph $G$ has a monofacial embedding on an orientable surface if and only if there is a spanning tree $T$ such that every component of $G - T$ has an even number of edges. \end{theorem}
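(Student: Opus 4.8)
The plan is to prove both implications, treating the sufficiency (``if'') direction as an explicit rotation-building algorithm and the necessity as its reverse. Throughout $G$ is connected (it has a spanning tree) and $\beta = |E(G)| - |V(G)| + 1$ is its cycle rank. The pivotal elementary fact I would isolate first is that a graph decomposes into paths of length two, i.e.\ into pairs of edges sharing a common vertex, if and only if each of its connected components has an even number of edges. The forward direction is immediate, and the converse follows by a short induction, pairing the edges of each component along a rooted spanning tree from the leaves toward the root. Applied to the cotree $G - T$, this says that the hypothesis ``every component of $G - T$ has an even number of edges'' is equivalent to partitioning the $\beta$ cotree edges into adjacent pairs.

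For sufficiency I would start from any rotation of the spanning tree $T$. Since $\beta(T) = 0$, the face-tracing algorithm gives exactly one face and genus $0$, so $T$ is embedded monofacially on the sphere. I then add the cotree edges back one adjacent pair at a time, maintaining a single face. To insert a pair $\{uv, vw\}$ (both endpoints already present), first insert $uv$: because the current embedding has a single face $F$, the vertices $u$ and $v$ both lie on $\partial F$, so $uv$ splits $F$ into two faces $F_1, F_2$, and since $v$ is an endpoint of the inserted edge it acquires a corner on each of $F_1$ and $F_2$. The vertex $w$ lies on at least one of the two, say $F_2$; now insert $vw$ from a corner of $v$ on $F_1$ to the corner of $w$ on $F_2$. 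Its two ends lie in different faces, so this insertion merges $F_1$ and $F_2$ back into one. Thus each pair insertion preserves monofaciality while raising the genus by one, and after all $\beta/2$ pairs we obtain the desired monofacial embedding of $G$; reading off the local rotations at each step yields the promised algorithm.

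For necessity I would run this process backwards. Given a monofacial embedding with $\beta \ge 2$, delete a non-bridge edge $e = uv$; by the face-tracing algorithm the unique boundary walk, which meets $e$ twice, is cut at those two occurrences into exactly two closed walks, so $G - e$ has two faces. I then locate a second edge $f$ incident to $v$ or $u$ that is traversed once by each of the two walks, whose removal re-merges them into a single face while keeping the graph connected; deleting $\{e, f\}$ leaves a smaller connected monofacial embedding, and the induction terminates at a spanning tree $T$. The deleted edges are exactly the cotree $G - T$, grouped into adjacent pairs, so $G - T$ has a decomposition into paths of length two and hence all components even.

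The main obstacle is precisely the existence of this adjacent re-merging edge $f$ in the necessity step: one must verify that some edge at an endpoint of $e$ is split between the two new face walks and that its deletion does not disconnect $G$, which requires a careful reading of how the single face is cut at a vertex. A clean way to sidestep the delicate case analysis is to invoke the cited result: the statement is the case of Xuong's Theorem \cite{X} in which the cotree deficiency vanishes, under which the minimum number of faces of a connected graph equals $1 + \xi(G)$, where $\xi(G)$ is the least number of odd-edge components among all cotrees $G - T$; a single-faced embedding therefore exists exactly when some cotree has only even components.
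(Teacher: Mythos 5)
Your proposal is sound, and it in fact supplies strictly more detail than the paper does: the paper gives no proof of Theorem \ref{xuong} at all, stating only that it is a special case of Xuong's Theorem \cite{X} and remarking that the proof yields an algorithm for computing the rotation. Your sufficiency argument is exactly that algorithmic proof: the reduction of the even-component hypothesis to a decomposition of the cotree into adjacent edge pairs, the one-face spherical embedding of the spanning tree, and the insertion of each pair $\{uv,vw\}$ so that $uv$ splits the single face into $F_1,F_2$ (giving $v$ a corner on each) and $vw$ is then drawn between different faces so as to re-merge them, raising the genus by one while keeping one face. All steps there check out, including the key observations that every vertex lies on the boundary of the unique face and that an edge inserted between corners of two distinct faces merges them. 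This is the classical lower-bound half of Xuong's theorem, and it is precisely the algorithm the paper alludes to.

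You are also right to distrust your hands-on induction for necessity, and right that the clean fix is to invoke \cite{X}, which is all the paper itself does. The gap you flagged is genuine and is in fact worse than just ``delicate'': after deleting a non-bridge edge $e=uv$, cutting the boundary walk at the two traversals of $e$ yields one closed walk based at $v$ and one based at $u$, and there is no a priori reason that some edge at $u$ or $v$ has one traversal in each walk --- the corners at $v$ other than those flanking $e$ may all land in a single walk. Moreover, even when such an $f$ exists, the induction must additionally ensure that deleting the pair $\{e,f\}$ neither disconnects $G$ nor strands a vertex (e.g., both edges at a degree-$2$ vertex can never be a deleted pair, since that vertex must retain tree degree at least one), so that the process actually terminates at a spanning tree. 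This reverse direction is the genuinely hard half of Xuong's theorem and is not proved in the literature by naive reversal of the pair-adding construction. Your fallback statement of the citation is accurate: minimizing faces is equivalent to maximizing genus, $\gamma_M(G)=(\beta(G)-\xi(G))/2$ gives minimum face count $1+\xi(G)$, and a monofacial embedding exists exactly when $\xi(G)=0$, i.e., when some cotree has only even components. So your write-up, with the citation doing the work for necessity, is correct and matches the paper's approach while adding a complete proof of the constructive direction.
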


Kundu \cite{K} showed that every 4-edge-connected graph has two disjoint spanning trees. This combined with Theorem \ref{xuong} gives:

\begin{corollary} If $G$ is 4-edge-connected and $|V| \not \equiv |E| \pmod 2$, then $G$ has a monofacial embedding. \end{corollary}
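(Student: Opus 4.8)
The plan is to combine the two cited results directly, since the corollary is essentially an immediate consequence of Kundu's theorem and Theorem~\ref{xuong}. First I would invoke Kundu's result: since $G$ is assumed $4$-edge-connected, there exist two edge-disjoint spanning trees $T_1$ and $T_2$ of $G$. I would take $T = T_1$ as the spanning tree whose existence Theorem~\ref{xuong} requires, and the goal is then to verify that every component of $G - T$ has an even number of edges, possibly after a small adjustment.

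The key observation is that $G - T_1$ contains the second spanning tree $T_2$ as a subgraph, so $G - T_1$ is connected (indeed spanning and connected). Thus $G - T = G - T_1$ is a single connected subgraph rather than a union of many components, which makes the parity condition in Theorem~\ref{xuong} easy to control: there is only one component to worry about, and I must check that it has an even number of edges. Here I would use the hypothesis $|V| \not\equiv |E| \pmod 2$ together with the fact that $T_1$ is a spanning tree and hence has exactly $|V| - 1$ edges. The number of edges of $G - T_1$ is $|E| - (|V| - 1) = |E| - |V| + 1$, and the parity hypothesis $|V| \not\equiv |E| \pmod 2$ forces $|E| - |V|$ to be odd, so $|E| - |V| + 1$ is even. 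Therefore the unique component $G - T_1$ has an even number of edges, and Theorem~\ref{xuong} yields the desired monofacial embedding.

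The main subtlety, and the step I would check most carefully, is the claim that $G - T_1$ is connected. This is where the existence of the \emph{second} disjoint spanning tree $T_2$ is used: because $T_2 \subseteq G - T_1$ is a spanning tree, the subgraph $G - T_1$ must be connected, so we genuinely have a single component and need not worry about distributing parity across several components. Without two disjoint spanning trees one could not guarantee connectivity of $G - T$, and the clean counting argument above would break down. Thus the whole force of the $4$-edge-connectivity hypothesis is funneled, via Kundu's theorem, into producing that second tree and hence the single-component structure of $G - T$.

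I expect no serious obstacle here, since both nontrivial ingredients (Xuong's characterization and Kundu's two-tree theorem) are quoted as black boxes; the only real content is the parity bookkeeping and the remark that a spanning subgraph containing a spanning tree is connected. One point to state explicitly for rigor is that the parity count is insensitive to how many edges lie in $T_1$ beyond its being a spanning tree on all $|V|$ vertices, which is automatic. With those remarks in place the corollary follows immediately.
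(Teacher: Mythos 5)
Your proof is correct and is exactly the argument the paper intends: the corollary is stated as an immediate combination of Kundu's two-tree theorem and Theorem~\ref{xuong}, with $T = T_1$, connectivity of $G - T_1$ via the second tree $T_2$, and the parity count $|E| - (|V|-1)$ even. No gaps; the bookkeeping is exactly right.
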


\subsection{Current graphs}

Current graphs \cite{GA} were originally developed as quotients of surface embeddings. In particular, a monofacial embedding of a graph with currents added to the edges are used to construct a rotation on a derived graph, usually complete. We briefly describe this construction.

A {\em current assignment} on $G$ with {\em currents} from $\mathbb Z_m$ is a function $\kappa : D(G) \rightarrow \mathbb Z_m$ such that $\kappa(e^-) = -\kappa(e^+)$. We frequently require the following conditions:
\begin{enumerate}
\item (Kirchoff's Current Law - {\em KCL}) For every vertex $v$, \ $\sum_{e \in D_v} \kappa (e) \equiv 0 \pmod m$,
\item (Unique Currents) $\kappa$ is a bijection between $D(G)$ and $\mathbb Z_m \setminus \{0\}$,
\item (Monofacial) $G$ is embedded on a surface with a single face (this property of $G$ is independent of $\kappa$).
\end{enumerate}

\begin{figure}[htb]
\begin{center} 
\includegraphics[width=.5\textwidth]{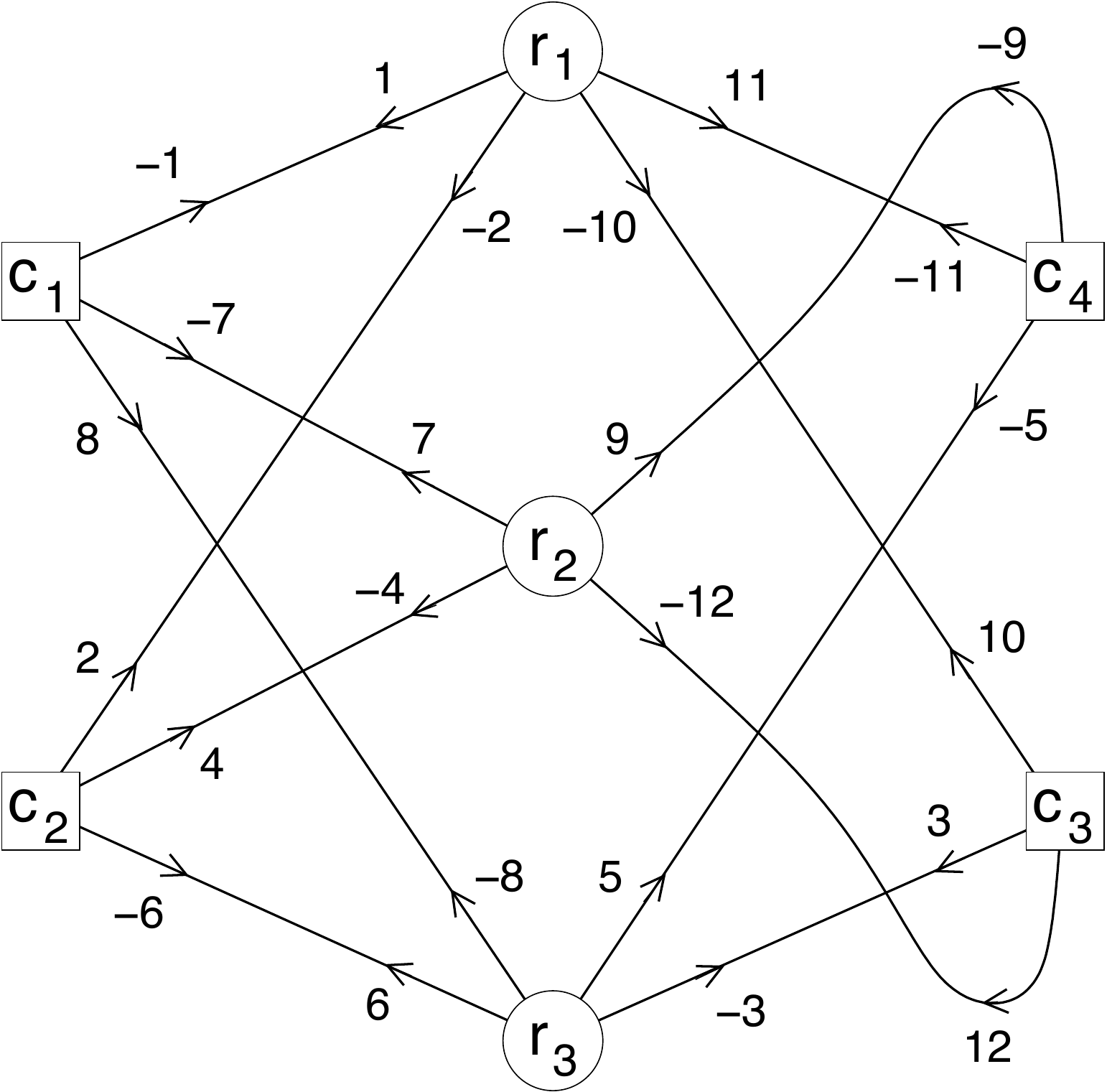} 
\end{center} 
\caption{\label{unsigned_k34} A current graph}
\end{figure}

Figure \ref{unsigned_k34} shows a unique-current assignment that satisfies {\em KCL} in $\mathbb Z_{25}$. Here, as elsewhere, we denote a directed edge $\bar e$ by its unique current $\kappa (\bar e)$. We read each local rotation anticlockwise as the edges emanate from the vertices; for example, around vertex $C_4$ the directed edges with currents $(-9,-11,-5)$ appear in that cyclic order. 
This gives the rotation:
\begin{eqnarray}
\label{rho} \rho = & (1,-2,-10,11)(7,-4,-13,9)(-8,6,-3,5)\\ 
\nonumber & (-1,8,-7)(2,-6,4)(10,3,12) (-11,-5,-9).
\end{eqnarray}
The face tracing algorithm gives a single face:
\begin{eqnarray}
\label{rhotau} \rho \circ \tau = & (1,8,6,4,-12,10,11,-5,-8,-7,-4,2, \\ 
\nonumber & -10,3,5,-9,7,-1,-2,-6,-3,12,9,-11).
\end{eqnarray}

A rotation $\rho$ on a current graph induces a local rotation on $D_v$ for each vertex $v$, say $(e_1,\dots,e_k)$. This local rotation is {\em simple with respect to $\kappa$} if the corresponding partial sums $s_i = \sum_{j=1}^i \kappa(e_j)$ are all distinct, similar to the definition of a simple ordering for a subset of a group.

An $(s,t)$-{\em biregular graph} with {\em biorder} $(m,n)$ is a bipartite graph with one part having $m$ vertices of degree $s$ and the other part having $n$ vertices of degree $t$.

\begin{theorem}\label{big_theorem}Let $G$ be an $(s,t)$-biregular graph of biorder $(m,n)$. Suppose that $G$ has a rotation $\rho$ giving a monofacial embedding and a unique-current assignment $\kappa$ from $Z_{2ms+1}$ satisfying {\em KCL}. Then there is an embedding of $K_{2ms+1}$ on an orientable surface such that each edge lies on a face of size $s$ and a face of size $t$. Moreover, if each local rotation on $G$ is simple with respect to $\kappa$, then the faces of $K_{2ms+1}$ are simple cycles. \end{theorem}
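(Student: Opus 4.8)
The plan is to carry out the standard index-one derived-embedding construction from current-graph theory, letting the two part-degrees of $G$ control the two face sizes. First I would build the derived graph and its rotation. Since the currents are unique, $\kappa$ is a bijection $D(G) \to \mathbb{Z}_{2ms+1} \setminus \{0\}$ with $\kappa(e^-) = -\kappa(e^+)$; I take the derived vertex set to be $\mathbb{Z}_{2ms+1}$ and join $g$ to $g + c$ for each nonzero $c$. Because every nonzero element occurs exactly once as a current and the two directions of each edge carry $c$ and $-c$, every unordered pair of vertices is joined exactly once, so the derived graph is $K_{2ms+1}$. The single face of the monofacial embedding, traced by $\rho \circ \tau$, reads off a cyclic sequence of directed edges whose currents $(c_1, \dots, c_D)$, with $D = 2ms$, list every nonzero element once; I would define the rotation $\rho^*$ at each vertex $g$ to be the neighbour list $(g + c_1, \dots, g + c_D)$. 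Each $\rho^*_g$ is then a cyclic permutation of all neighbours of $g$, so $\rho^*$ is a genuine rotation and, by the rotation--embedding correspondence, determines an orientable cellular embedding of $K_{2ms+1}$.

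The heart of the argument is to identify the faces by running the face-tracing algorithm $\rho^* \circ \tau$ on the derived graph. Representing a directed edge as a pair (tail, current), a short computation shows that $\rho^* \circ \tau$ sends $(g, c) \mapsto (g + c, \pi(c))$, where $\pi$ is the permutation of $\mathbb{Z}_{2ms+1} \setminus \{0\}$ defined by $\pi(c) = c_{j+1}$ with $c_j = -c$. Translating this back through $\kappa$, I expect to verify the key identity $\pi = \kappa \rho \kappa^{-1}$, so that $\pi$ is conjugate to the current-graph rotation $\rho$. Consequently the orbits of $\pi$ correspond exactly to the vertices of $G$, the orbit attached to a vertex $v$ being the set $\{\kappa(e) : e \in D_v\}$ of currents on the edges out of $v$, and hence having length $\deg(v)$.

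With this correspondence in hand the remaining steps are comparatively routine. Tracing a face from $(g, c)$ with $c$ in the $\pi$-orbit of $v$, the tail vertices visited are $g$ plus the partial sums of the currents around $v$; by Kirchhoff's Current Law these currents sum to $0$, so the walk closes after exactly $\deg(v)$ steps and the face has size $\deg(v)$. Thus every face has size $s$ or $t$ according as it comes from a degree-$s$ or a degree-$t$ vertex. To see that each edge meets one face of each size, note that the edge $\{g, g+c\}$ carries the two directed edges $(g, c)$ and $(g+c, -c)$, whose faces come from the $\pi$-orbits through $c$ and through $-c$; but $c$ and $-c$ are the currents of the two directions of a single edge of $G$, whose endpoints lie in opposite parts of the bipartition, one of degree $s$ and one of degree $t$. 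Hence the two faces on each edge of $K_{2ms+1}$ have sizes $s$ and $t$. For the ``moreover'' clause, the vertices around the face through $(g,c)$ are exactly $g$ translated by the partial sums of the local rotation at $v$; these are distinct precisely when that local rotation is simple with respect to $\kappa$, in which case the face is a simple cycle, just as in Proposition \ref{cycle_decomposition}.

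I expect the main obstacle to be the bookkeeping in the second paragraph: pinning down the action of $\rho^* \circ \tau$ on (tail, current) pairs and establishing the conjugacy $\pi = \kappa \rho \kappa^{-1}$ cleanly, since this is where the current-graph machinery does its real work. Once the orbits of $\pi$ are matched to the vertices of $G$, KCL and the bipartiteness of $G$ deliver the face sizes and the simple-cycle condition almost immediately.
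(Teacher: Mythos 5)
Your proposal is correct and takes essentially the same approach as the paper: the paper's proof is exactly this standard index-one derived-embedding construction (the rotation at vertex $0$ read off the currents along the single face and developed additively over $\mathbb{Z}_{2ms+1}$), with the face-tracing details deferred to the cited references \cite{GT,R,GA}. Your conjugacy identity $\pi = \kappa \rho \kappa^{-1}$ is a correct formalization of the step the paper states without proof, namely that each degree-$d$ vertex of $G$ satisfying KCL yields $2ms+1$ faces of size $d$.
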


\begin{proof} This is the standard construction of a derived embedding from a current graph \cite{GT,R}; a careful analysis is given in \cite{GA}. The vertex set of $K_{2ms+1}$ will be the elements of $\mathbb Z_{2ms+1}$. Let $e_1,\dots,e_{2ms}$ denote the directed edges traversed in the single face of the embedding of $G$. Define the local rotation at vertex $0 \in \mathbb Z_{2ms+1}$ as $(\kappa(e_1),\dots,\kappa(e_{2ms}))$. Develop this rotation in $\mathbb Z_{2ms+1}$ by defining the rotation at vertex $i \in \mathbb Z_{2ms+1}$ as $(\kappa(e_1)+i,\dots,\kappa(e_{2ms})+i)$.

We use the face-tracing algorithm to show that a vertex of degree $d$ in $G$ satisfying {\em KCL} corresponds to $2ms+1$ faces of size $d$ in the embedding of $K_{2ms+1}$. Since the graph is $(s,t)$-biregular, each edge of $K_{2ms+1}$ lies on faces of size $s$ and $t$. Likewise, if each local rotation is simple, then the corresponding faces of the embedding of $K_{2ms+1}$ are simple cycles. \end{proof}

The rotation $\rho$ on the current graph $G$ plays two independent roles in this construction: {\em i}) $\rho$ generates a monofacial embedding, and {\em ii}) each local rotation $\rho_v$ is simple with respect to the current assignment $\kappa$. Any rotation on a vertex of degree $d \le 5$ is simple.

There is a quick way to find the faces of the derived embedding arising from an embedded current graph. Let $v$ be a vertex of the current graph $G$ with degree $k$. The rotation $\rho$ giving the monofacial embedding determines a local rotation $\rho_v = (e_1,\dots,e_k)$ on $D_v$. Consider $s_i = \sum_{j=1}^i \kappa(e_j)$. By {\em KCL} $s_k \equiv 0 \pmod {2ms+1}$. The faces of the embedding of $K_{2ms+1}$ are precisely the cyclic shifts of $(0,s_1,\dots,s_{k-1})$. Describing the faces is enough to determine the embedding. A full proof of Theorem \ref{big_theorem} shows that the monofacial condition guarantees that these faces meet in a cyclic manner at each vertex, i.e., we have a surface without pinch points. 

\section {Relating Heffter arrays and current graphs}\label{relation}

We have described two seemingly different objects: Heffter arrays and current graphs on biregular graphs. We show they are closely related.

\begin{proposition}\label{array_to_current} A Heffter array $H = H(m,n;s,t)$ is equivalent to a unique-current assignment $\kappa$ on a $(s,t)$-biregular graph $G$ of biorder $(m,n)$. This graph is connected if and only if $H$ is not block diagonal. 

Two compatible orderings $\omega_r$ and $\omega_c$ on the row and column Heffter systems of $H$ are equivalent to a monofacial rotation $\rho$ on $G$. Moreover, if $\omega_r$ and $\omega_c$ are both simple, then $\rho$ is simple with respect to $\kappa$. \end{proposition}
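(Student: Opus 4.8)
The plan is to establish the equivalence in two halves, matching the two paragraphs of the statement, and to build everything on top of the equivalence between orthogonal Heffter systems and Heffter arrays that was already set up in Section~\ref{heffter} together with Theorem~\ref{big_theorem}.

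For the first paragraph, I would construct the biregular graph $G$ and the current assignment $\kappa$ directly from $H$. The natural candidate is a bipartite graph whose two vertex classes are indexed by the $m$ rows and the $n$ columns of $H$; I put an edge between row-vertex $i$ and column-vertex $j$ precisely when cell $a_{i,j}$ is nonempty, and I direct it (say from the row side to the column side) carrying current $\kappa = a_{i,j}$, with the reverse direction carrying $-a_{i,j}$, so that $\kappa(e^-) = -\kappa(e^+)$ holds by construction. Since each row has $s$ filled cells and each column has $t$, the row-vertices have degree $s$ and column-vertices degree $t$, giving an $(s,t)$-biregular graph of biorder $(m,n)$. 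The \textit{unique currents} condition is exactly the statement that the entries of $H$ are distinct up to sign and collectively hit every nonzero element of $\mathbb Z_{2mn+1}$ (here $2ms+1 = 2mn+1$ since $ms = nt$... I should double-check the indexing so that the current group is $\mathbb Z_{2ms+1}$ as in Theorem~\ref{big_theorem}). \textit{KCL} at a row-vertex is the vanishing of that row's sum, and at a column-vertex the vanishing of that column's sum, which is precisely the defining property of the row and column Heffter systems. The connectivity claim is a short combinatorial argument: $G$ disconnects into two pieces exactly when the rows and columns split into two blocks with no cell joining them, which is the block-diagonal condition; I would phrase this as ``$G$ is disconnected $\iff$ its bipartite adjacency structure is block diagonal $\iff$ $H$ is block diagonal.''

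For the second paragraph, I would unwind the definitions of $\omega_r$, $\omega_c$, and the rotation $\rho$. A rotation on $G$ assigns to each vertex a cyclic order of its incident directed edges; for a row-vertex this is a cyclic ordering of the $s$ filled cells in that row, i.e. an ordering of that part of the row Heffter system, and that collection of local rotations over all rows is exactly the datum $\omega_r$; symmetrically $\omega_c$ is the collection of local rotations on the column-vertices. So specifying $\rho$ is the same as specifying the pair $(\omega_r, \omega_c)$. By the earlier theorem identifying rotations with embeddings, $\rho$ is monofacial iff $\rho \circ \tau$ is a single cycle. The crux is to show that $\rho\circ\tau$ being a single $2mn$-cycle is equivalent to the composition $\omega_r \circ \omega_c$ being a single cyclic permutation of the half-set, which is the definition of compatibility. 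Here I would track how the face-tracing permutation $\rho\circ\tau$ acts on a directed edge: following $\tau$ reverses a row-to-column edge into a column-to-row edge, and following $\rho$ then advances within the cyclic order at that endpoint; composing these two moves, one sees that $\rho\circ\tau$ restricted to (say) the row-directed edges factors as $\omega_r$ followed by $\omega_c$ (up to the identification of directed edges with signed half-set elements via $\kappa$), so the single-orbit condition on $\rho\circ\tau$ translates exactly into $\omega_r\circ\omega_c$ being a single cycle on $L$. Finally, the simplicity claim is almost immediate: a local rotation $\rho_v$ is simple with respect to $\kappa$ precisely when its partial current sums are distinct, and $\kappa$ on the edges at $v$ is exactly the corresponding part of the Heffter system with the ordering induced by $\omega_r$ or $\omega_c$, so simplicity of every part of $\omega_r$ and $\omega_c$ is the same as simplicity of every $\rho_v$.

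The main obstacle is the bookkeeping in the compatibility step: making precise the identification of directed edges of $G$ with signed elements of the half-set via $\kappa$ and $\tau$, and then verifying that the face-tracing permutation decomposes as $\omega_r \circ \omega_c$ with the correct orientation conventions. The danger is a sign or direction error that turns $\omega_r\circ\omega_c$ into $\omega_c\circ\omega_r^{-1}$ or some conjugate; since these all have the same cycle type, the \emph{single-orbit} conclusion is robust to such slips, but I would want to fix orientation conventions once at the start (row-to-column as $e^+$, reading local rotations anticlockwise as in Section~\ref{current_graphs}) and carry them through consistently so that the stated composition $\omega_r\circ\omega_c$ is literally correct and not merely correct up to conjugacy.
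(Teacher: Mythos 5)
Your proposal is correct and follows essentially the same route as the paper: the identical row/column bipartite current graph with the dictionary (row and column sums $\leftrightarrow$ \emph{KCL}, half-set $\leftrightarrow$ unique currents, disconnected $\leftrightarrow$ block diagonal), and the same key observation that the face-tracing permutation alternates between the two sides so that compatibility of $\omega_r$ and $\omega_c$ yields a single face --- the paper realizes this by writing $\rho$ as a permutation of currents with $\rho(a)=\omega_r(a)$ on $L$ and $\tau\circ\omega_c\circ\tau(a)$ off $L$, so that $(\rho\circ\tau)^2|_L=\omega_r\circ\omega_c$. One bookkeeping slip of exactly the kind you flagged: it is $(\rho\circ\tau)^2$, not $\rho\circ\tau$, that preserves the row-directed edges and factors as $\omega_r\circ\omega_c$ (since $\rho\circ\tau$ swaps the two classes of directed edges), but this does not affect your single-orbit conclusion.
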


\begin{proof} Given a Heffter array $H(m,n;s,t)$, form a bipartite graph $G$ whose vertex set is the rows of $H$ together with its columns. For each non-empty $a_{i,j}$ in $H$ add an edge in $G$ labeled with current $a_{i,j}$ directed from the $i^{th}$ row of $H$ to its $j^{th}$ column; as usual the reverse edge receives the negative current. Since $H$ has $s$ entries per row and $t$ per column, the resulting graph is $(s,t)$-biregular graph of biorder $(m,n)$. Each row and column of $H$ sums to 0, so $G$ satisfies {\em KCL}. The entries of $H$ form a half-set $L$, so $G$ has unique currents. If $G$ is disconnected, then the components give partition of rows and columns showing that $H$ is block diagonal. 

Relating the orderings $\omega_r$ and $\omega_c$ on $H$ to the rotation $\rho$ on $G$ is more difficult. We use the unique currents to describe $\rho$ not as a permutation of directed edges but rather as a permutation of their nonzero currents. We use $\tau(a)$ for $-a$ reflecting oppositely directed edges receive inverse group elements. Define $\rho : \mathbb Z_{2ms+1}\setminus \{0\} \rightarrow \mathbb Z_{2ms+1}\setminus \{0\}$ by:

$$\label{define_rho} \rho(a) = {\Big \{}
\begin{array}{c@{\quad\quad}l}
\omega_r(a) & a \in L \\
\tau \circ \omega_c \circ \tau(a) & a \notin L
\end{array}.$$ 

Note that if $a \in L$, then $(\rho \circ \tau)^2 (a) = \omega_r \circ \omega_c (a)$. Since $\omega_r$ is compatible with $\omega_c$, $(\rho \circ \tau)^2$ acts cyclically on $L$. The odd powers of $\rho \circ \tau$ act cyclically on $-L$. Hence $\rho$ acts cyclically on $\mathbb Z_{2ms+1}\setminus \{0\}$ and the embedding is monofacial as desired. 

The reverse of the construction above shows that a current assignment corresponds to a Heffter array, hence the equivalence.  \end{proof} 

We combine our results for the following: \medskip

\begin{proof} (of Theorem \ref{main}) Apply Theorem \ref{array_to_current} to a Heffter array to build a current graph. Theorem \ref{big_theorem} then gives the desired embedding. \end{proof}

We illustrate the relation with an example. Figure \ref{three_by_four} gives a Heffter array $H(3,4)$ with entries from $\mathbb Z_{25}$. We simply order the parts of the row system $\omega_r = (1,-2,-10,11)(-8,6,-3,5)(7,-4,-12,9)$ and simply order the column system $\omega_c = (1,-8,7)(-2,6,-4)(-10,-3,-12)(11,5,9)$. Together these give the rotation $\rho$ given in Equation \ref{rho}, agreeing with the projected rotation on the $K_{3,4}$ of Figure \ref{unsigned_k34}. In turn this gives the monofacial face $\rho \circ \tau$ of Equation \ref{rhotau}. 

This current graph gives an embedding of $K_{25}$ on an orientable surface with every face on the boundary of triangle and a quadrilateral, i.e., a biembedding of a 3-cycle system with a 4-cycle system. Euler's formula implies this surface is of genus 49.

\section {Weak Heffter arrays and nonorientable \\ embeddings}\label{nonorientable}

A variation of Heffter arrays corresponds to signed current graphs embedded on nonorientable surfaces. In turn, this can be used to construct nonorientable $\{s,t\}$-biembeddings of complete graphs. In this section we describe this relationship begining with embeddings of signed graphs.

\subsection{Signed current graphs}

A {\em signed graph} $G^\pm$ is a graph $G$ together with a signature $\sigma : E(G) \rightarrow \{ +,-\}$. The {\em signature of a cycle} in $G$ is the product of the signatures on its edges. A {\em local switch} at $v$ toggles the sign of each edge incident with $v$. Two signatures are {\em equivalent} if and only if they are related by a sequence of local switches. A signature is equivalent to the all-positive signature if and only if the negative edges form a co-cycle, i.e., if there is no odd-length negative cycle. In this case the signature is called {\em balanced}, otherwise it is {\em unbalanced}. 

We describe {\em signed-embeddings} of signed graphs. As before, for each vertex we give a cyclic permutation $\rho_v$ of $D_v$. We keep track of the {\em local sense} of the orientation, one of two states {\em anticlockwise} or {\em clockwise}. When the local sense is anticlockwise, a face boundary entering a vertex $v$ on a directed edge $e^+$ leaves along $\rho_v(e^-)$. When it is clockwise we exit $v$ along the directed edge $\rho^{-1}_v(e^-)$. When traversing a negative edge we reverse our local sense of orientation. The face boundary closes when we reach a directed edge previously traversed in the same local sense. This process of tracing the orbits of $\rho \circ \tau$ while keeping sense of the local sense of orientation is called the {\em modified face-tracing algorithm}. The resulting surface is orientable if and only if the signature is balanced. 

A {\em signed current assignment} is a function $\kappa : E(G^\pm) \rightarrow \mathbb Z_m$ such that $\kappa (e^-) = -\kappa (e^+)$ when $\sigma(e)$ is positive, and $\kappa (e^-)=\kappa(e^+)$ when $\sigma(e)$ is negative. In analogy with unsigned current assignments we frequently require the following: 

\begin{enumerate}
\item (Kirchoff's Current Law - {\em KCL}) For every vertex $v$, \ $\sum_{e \in D_v} \kappa (e) \equiv 0 \pmod m$,
\item (Signed-Unique Currents) A current $\kappa$ occurs on a unique directed edge unless that edge is signed negatively, where one of $\kappa$ or $-\kappa$ appears twice and the other not at all, and
\item (Monofacial) $G^\pm$ is signed-embedded on a surface with a single face (this property of $G^\pm$ is independent of $\kappa$).
\end{enumerate}

\begin{figure}[htb]
\begin{center} 
\includegraphics[width=.5\textwidth]{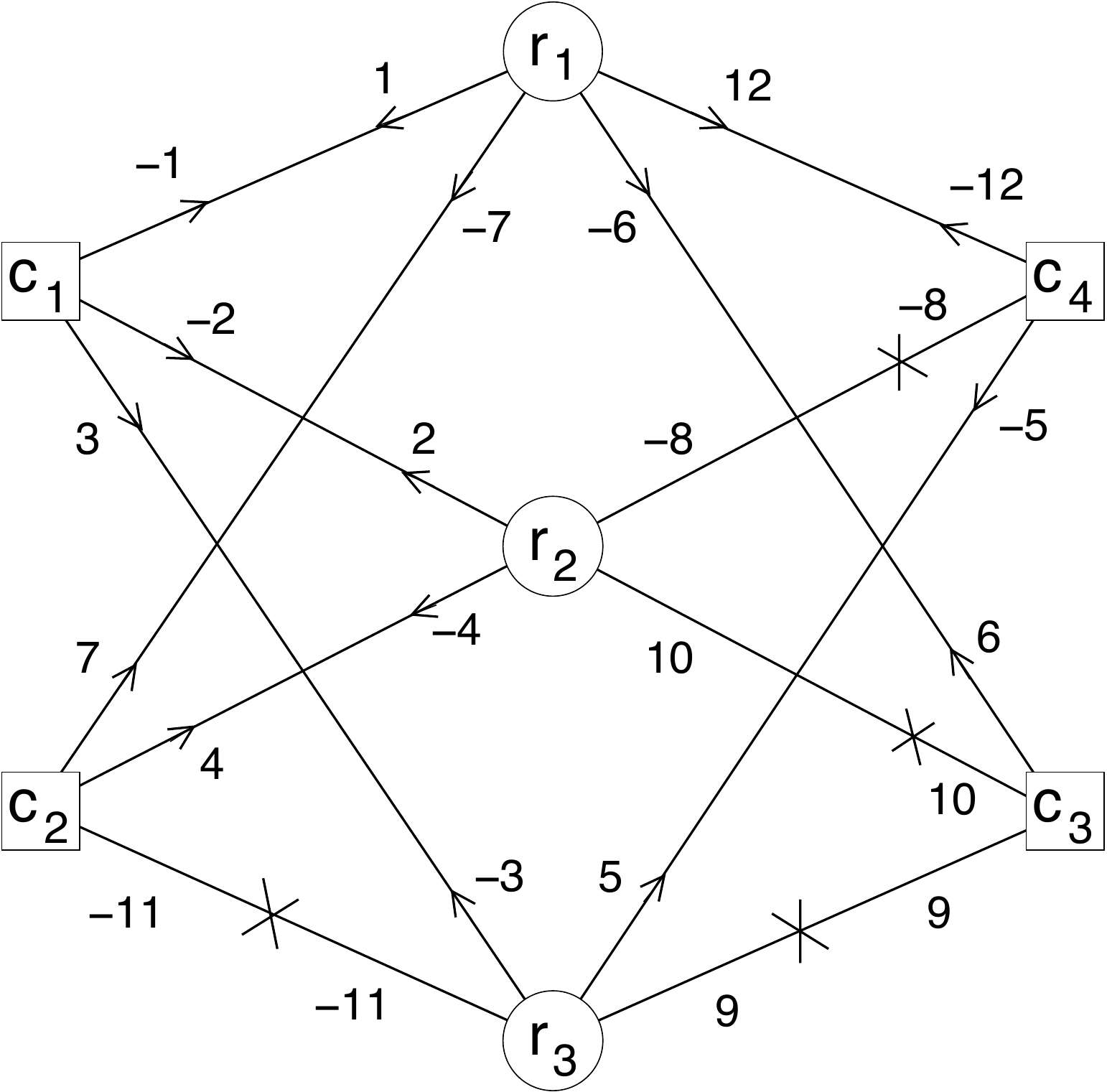} 
\end{center} 
\caption{\label{signed_k34} A signed current graph}
\end{figure}

For example, a signed current graph on $K_{3,4}$ is shown in Figure \ref{signed_k34}. Using the notation of Youngs \cite{Y} the negatively signed edges are indicated with an $\times$ in the middle; they receive currents -8,9,10,-11. The current assignment in the figure satisfies {\em KCL} and has signed-
s. A local rotation at each vertex is read off anticlockwise in this projection diagram. Using this rotation and signature the modified face-tracing algorithm gives the single face:
\begin{eqnarray}
\nonumber (1,3,-11,7,1,-2,-8,-5,-3,-2,-4,7,-6, \\
\nonumber 10,-4,-11,9,10,-8,-12,-6,9,5,-12).
\end{eqnarray}

The restriction of $\rho$ to $D_v$ gives a cyclic permutation of the currents $\kappa(e^+)$, $e^+ \in D(v)$. As before, define $\rho$ to be {\em simple at $v$} if the corresponding partial sums on $D_v$ are all distinct, and to be {\em simple} if $\rho$ is simple at each vertex $v$.

The following is analogous to Theorem \ref{big_theorem}.

\begin{theorem}\label{big_theorem_2} Let $G^\pm$ be an $(s,t)$-biregular unbalanced signed graph of order $(m,n)$. Suppose that $G^\pm$ has a monofacial nonorientable embedding and a signed unique-current assignment from $Z_{2ms+1}$ satisfying {\em KCL}. Then there is an embedding of $K_{2ms+1}$ on a nonorientable surface such that each edge lies on a face of size $s$ and a face of size $t$. Moreover, if each local rotation on $G$ is simple, then the faces of $K_{2ms+1}$ are simple cycles. \end{theorem}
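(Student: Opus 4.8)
The plan is to mirror the proof of Theorem~\ref{big_theorem} as closely as possible, replacing each orientable tool with its signed analogue, so the bulk of the argument is a dictionary translation rather than a new idea. As before, I would take the vertex set of $K_{2ms+1}$ to be $\mathbb{Z}_{2ms+1}$, read off the directed edges $e_1,\dots,e_{2ms}$ traversed in the single face of the signed-embedding of $G^\pm$, and use them to prescribe a rotation-plus-signature on the derived graph by developing the sequence $(\kappa(e_1),\dots,\kappa(e_{2ms}))$ additively over $\mathbb{Z}_{2ms+1}$. The one genuinely new bookkeeping element is the local sense of orientation from the modified face-tracing algorithm: when a negative edge of $G^\pm$ is crossed the sense flips, and this flip must be pushed down to a signature on the edges of $K_{2ms+1}$ so that the derived object is a bona fide signed-embedding and not just a rotation.

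First I would verify the current-counting claim in the signed setting: that a vertex $v$ of degree $d$ in $G^\pm$ satisfying \emph{KCL} yields $2ms+1$ faces of size $d$ in the derived embedding. This is where signed-unique currents do their work. Under that condition a current value either labels a single directed edge (a positive edge) or labels a negatively-signed edge where one of $\kappa,-\kappa$ appears twice; the face-tracing on the derived graph walks through the rotation at successive vertices $i, i+s_1, i+s_2,\dots$ with partial sums $s_\ell=\sum_{j\le\ell}\kappa(e_j)$, and \emph{KCL} forces $s_d\equiv 0$, closing each derived face after exactly $d$ steps. Biregularity then immediately gives that every edge of $K_{2ms+1}$ lies on one face of size $s$ and one of size $t$, and the simplicity hypothesis gives that these faces are simple cycles, exactly as in Theorem~\ref{big_theorem}.

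Next, and this is the step I expect to carry the real content, I would show that the derived surface is \emph{nonorientable}. In the orientable proof this was free; here it must be argued from the hypothesis that $G^\pm$ is unbalanced, i.e.\ carries an odd negative cycle. The idea is that the unbalanced signature on $G^\pm$ descends, through the modified face-tracing algorithm, to an unbalanced signature on the derived embedding: a closed walk in $K_{2ms+1}$ that projects to an odd negative cycle of $G^\pm$ picks up an odd number of sense-reversals and hence bounds a nonorientable configuration. Equivalently, I would exhibit a single derived face or a short cycle whose accumulated local sense is reversed, certifying that the resulting surface cannot be oriented. I would lean on the stated fact that ``the resulting surface is orientable if and only if the signature is balanced'' to convert unbalancedness of the quotient into nonorientability of the cover.

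The main obstacle, therefore, is not the face-size count (which transfers almost verbatim) but correctly tracking the sense-reversals induced by negative edges and proving they are globally inconsistent. Concretely I must confirm two things: that each negative edge of $G^\pm$ contributes a consistent sign-flip across all $2ms+1$ developed copies, so that the derived signature is well defined; and that the images of an odd negative cycle assemble into a noncontractible orientation-reversing loop rather than cancelling out. Once the sense-reversal is shown to be preserved in the development, the rest of the proof is the same derived-embedding machinery as in Theorem~\ref{big_theorem}, so I would close by stating that the standard signed current-graph construction (as in \cite{GT,Y}) supplies the remaining routine verification that the faces meet cyclically at each vertex with no pinch points.
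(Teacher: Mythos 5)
Your proposal follows essentially the same route as the paper's proof: the standard derived-embedding construction with the rotation read off the single face, the signature on $K_{2ms+1}$ obtained by lifting the signature of $G^\pm$, nonorientability certified by lifting a negative cycle of the unbalanced current graph to a negative cycle in the derived embedding, and face sizes and simplicity transferred exactly as in Theorem~\ref{big_theorem}. The paper's own proof is in fact terser than yours, delegating the well-definedness of the lifted signature and the pinch-point check to the standard references, so your added bookkeeping about sense-reversals is a refinement of, not a departure from, its argument.
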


\begin{proof} This is again a standard construction of a derived embedding from a current graph, see \cite{GT,R} for the full proof. The vertex set of $K_{2ms+1}$ is the elements of $\mathbb Z_{2ms+1}$. The monofacial signed embedding of the current graph is used to determine the rotation at a vertex of the derived graph. A lift of the signature on the current graph determines the signature on the derived complete graph.

Since the embedding underlying the current graph is unbalanced, it is in a nonorientable surface. Hence there is a negatively signed cycle. This in turn implies a negatively signed cycle in the derived embedding, i.e., it also is nonorientable. Finally, simple local rotations in the current graph correspond to simple cycles for faces in the derived graph as before. \end{proof}

Using the modified face-tracing algorithm there is a quick way to find the faces of the derived embedding arising from a embedded signed-current graph. Let $v$ be a vertex of the current graph $G^\pm$ with degree $k$. The rotation $\rho$ gives determines a local rotation $\rho_v = (e_1,\dots,e_k)$ on $D_v$. Consider $s_i = \sum_{j=1}^i \kappa(e_j)$. By {\em KCL} $s_k \equiv 0 \pmod {2ms+1}$. The faces of the embedding of $K_{2ms+1}$ are precisely the cyclic shifts of $(0,s_1,\dots,s_{k-1})$. Describing the faces is enough to determine the embedding. A full proof of Theorem \ref{big_theorem_2} shows that the monofacial condition guarantees that these faces again meet in a cyclic manner at each vertex. 

A key feature in the above construction is a monofacial embedding of a signed graph $G$. The following theorem addresses when such embeddings exist. Let $T$ be a positively signed spanning tree of $G$. A cotree component is {\em odd} if it has an odd number of edges and every negative edge is a bridge separating that component into two parts each having an odd number of edges.

\begin{theorem} A signed graph has a monofacial signed embedding if and only if there is a spanning tree with no odd components. \end{theorem}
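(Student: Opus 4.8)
The plan is to adapt the proof of Xuong's Theorem (Theorem \ref{xuong}) to the signed setting, where the parity obstruction is replaced by the more delicate ``odd component'' condition. The key observation is that the modified face-tracing algorithm computes orbits of $\rho \circ \tau$ while tracking the local sense of orientation, so I would first understand how a single cotree edge, when added to a monofacial rotation of a tree, changes the face count. First I would recall that a positively signed spanning tree $T$, embedded (trivially) on a sphere, is monofacial. I would then add the cotree edges one component at a time and track how the number of faces changes as each edge is spliced into the rotation. In the unsigned case, adding an edge to a one-face embedding either splits the face into two or merges two faces into one, and Xuong's argument shows one can always order the additions within a component so that each added edge (except possibly a parity-forced leftover) reduces the face count; a component with an even number of edges can be absorbed without changing parity in a way that permits returning to a single face. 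The signed analogue must account for the fact that traversing a negative edge reverses the local sense, so the effect of a negative bridge on face-tracing differs from that of a positive edge.

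The heart of the argument is a local analysis of how the modified face-tracing algorithm responds to splicing in one directed edge of each sign. For a positive cotree edge the behavior mirrors the unsigned case: inserting it either creates or destroys a face. For a negative edge the sense-reversal means that a negative edge joining two occurrences of the same face-boundary can \emph{fail} to split it, because the two passes through the endpoints now occur in opposite local senses; this is precisely the mechanism that allows a negative bridge to contribute to monofaciality rather than fragmenting the boundary. I would make this precise by fixing the rotation and computing $\rho \circ \tau$ together with the sense bookkeeping on a small configuration, showing that an odd positively-edged cotree component forces at least two faces, while the odd condition on negative bridges (each bridge separating its component into two odd halves) is exactly what is needed to re-merge the faces a single negative edge would otherwise create. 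The claim is then that a cotree component is ``odd'' in the stated technical sense if and only if it cannot be attached without leaving a residual second face, so the existence of a spanning tree all of whose cotree components are \emph{not} odd is equivalent to monofaciality.

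For the forward direction I would argue contrapositively: if every positively signed spanning tree has some odd cotree component, then an Euler-characteristic or parity argument on the component shows that no matter how the rotation is chosen, the component contributes an extra face that cannot be eliminated, so no monofacial embedding exists. For the converse, given a tree with no odd components, I would give the algorithmic construction, ordering the cotree edges within each component so that the running face count returns to one after the component is fully added, exactly as in the proof of Theorem \ref{xuong} but using the negative-edge splicing rule to handle the signed components. The main obstacle I anticipate is the bookkeeping for the local sense of orientation: unlike the clean parity argument in the unsigned case, here the face-tracing algorithm's behavior at a vertex depends on whether the boundary arrives clockwise or anticlockwise, so I will need a careful invariant --- perhaps a signed analogue of the ``number of edges modulo $2$ in each cotree component'' --- that simultaneously tracks face count and sense consistency, and proving that this invariant is governed precisely by the odd-component condition is where the real work lies.
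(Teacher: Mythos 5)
The paper states this theorem without proof --- it is the signed maximum-genus characterization of \v Sir\'a\v n and \v Skoviera \cite{SS} --- so there is no in-paper argument to compare against; your proposal must stand on its own, and it does not yet. Your sufficiency direction (splice cotree edges one component at a time into a monofacial rotation of a positively signed spanning tree, using the sense-reversal of a negative edge as the mechanism that lets a single edge be added without splitting the face) is the right Xuong-style mechanism and is plausibly completable, but you explicitly defer the central invariant, and one of your sentences inverts the combinatorics: a component in which \emph{every} negative edge is a bridge separating it into two odd halves is precisely the obstructed (``odd'') case --- such negative edges are the ones that \emph{cannot} help; the absorbable odd-size components are those containing some negative edge that fails this bridge condition. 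As written, the sentence claiming that condition ``is exactly what is needed to re-merge the faces'' would, if followed, lead the case analysis to the wrong conclusion.

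The genuine gap is the necessity direction. You argue contrapositively that if \emph{every} positively signed spanning tree has an odd cotree component then no monofacial embedding exists, but your sketch fixes one tree and claims its odd component ``contributes an extra face that cannot be eliminated, no matter the rotation.'' That quantifier structure cannot work: odd components are a property of a tree, not of the graph, and a component that is odd relative to one tree may disappear relative to another, so no per-tree, per-component analysis proves nonexistence. Worse, the parity tool you invoke from the orientable case (Theorem \ref{xuong}) is simply absent here: in a signed (nonorientable) embedding a single edge can be added to a one-face embedding while keeping one face, so $|V|-|E|+|F|$ parity and Euler-characteristic counts impose no obstruction at all. What is actually needed is the hard half of every Xuong-type theorem: a signed analogue of the Betti deficiency $\xi(G)$, defined as a minimum over all spanning trees of the number of odd cotree components, together with a global lower bound (\`a la Xuong or Nebesk\'y) showing that \emph{every} signed embedding has at least $1+\xi(G)$ faces. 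This lower-bound machinery --- typically an exchange or deficiency argument relating an arbitrary rotation-with-signature to some well-chosen spanning tree --- is the real content of the theorem, and your proposal contains no idea for it.
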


To apply Theorem \ref{big_theorem_2} this monofacial embedding need also induce a simple ordering at each vertex. We have no general theory for this and it remains a project for future research. 

\subsection{Weak Heffter arrays}

Two Heffter systems $D_1(2ms+1,s)$ and $D_2(2nt+1,t)$ with $ms = nt$ are {\em weakly sub-orthogonal} if the $i^{th}$ part of $D_1$ has at most one element $a_{i,j}$ such that either $a_{i,j}$ or $-a_{i,j}$ is in the $j^{th}$ part of $D_2$. Form a {\em weak Heffter array} $H(m,n;s,t)$ by placing $a_{i,j}$ in row $i$ column $j$. The upper sign on $\pm$ or $\mp$ is the {\em row sign} corresponding to its sign on $a_{i,j}$ in $D_1$, the lower sign is the {\em column sign} used in $D_2$. Using the row signs we get row sums 0 and the column signs give column sums 0. Figure \ref{weak} shows a weak Heffter array. 

\begin{figure}[htb]
$$ 
\begin{array}{|c|c|c|c|} \hline
1 & -7 & -6 & 12 \\ \hline
2 & -4 & \pm 10 & \mp 8 \\ \hline
-3 & \mp 11 & \pm 9 & 5 \\ \hline
\end{array}
$$
\caption{\label{weak} A weak Heffter array $H(3,4)$ over $\mathbb Z_{25}$}
\end{figure}

We relate a weak Heffter array $H(m,n;s,t)$ to a signed current assignment. Form a bipartite $G$ of order $(m,n)$ whose vertices are the rows and columns of $H$.  For each nonempty cell add an edge $e$ labeled with the row-signed current $a_{i,j}$ directed from the $i^{th}$ row of $H$ to its $j^{th}$ column. The same edge in the opposite direction is assigned $-a_{i,j}$ and signed positively unless the entry is signed $\pm$ or $\mp$, in which case it's assigned $a_{i,j}$ and is signed negatively. Figure \ref{signed_k34} shows the signed current graph corresponding to the weak Heffter array of Figure \ref{weak}; the negatively signed edges are marked with an $\times$.

\section {Conclusion}\label{conclusion}

We have introduced Heffter arrays and their relation with current graphs and with biembeddings. The following table summarizes these relations.

\bigskip
\begin{center}
\begin{tabular}{|l|l|l|} \hline
{\bf Heffter array} & {\bf current graph} & {\bf biembedding} \\ \hline \hline
group & current group & vertex set \\ \hline
rows and columns & bipartition of vertices & face-2-colorable\\ \hline
\# entries row/col & biregular graph & biregular face sizes\\ \hline
zero row/col sums & KCL & faces of size $s,t$\\ \hline
simple order & rotation at a vertex & faces are simple cycles\\ \hline
compatible orders & monofacial embedding & no pinch points at vertices\\ \hline
\end{tabular}
\end{center}
\bigskip

In our definition of a Heffter array $H(m,n;s,t)$ we required the row and column sums to be 0 modulo $2ms+1$. The following tighter requirement is useful in constructions. Let $L$ be a halfset of $\{\pm k \ | \ k = 1,\dots,ms\}$. An {\em integer Heffter array} $H(m,n;s,t)$ is an $m \times n$ array with entries from $L$ such that each row and each column sum to 0 over the integers.

\begin{lemma}\label{necessary_integer} If an integer Heffter array $H(m,n;s,t)$ exists, then $ms \equiv 0,3 \pmod{4}$. \end{lemma}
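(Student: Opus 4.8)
The plan is to collapse the entire row/column structure to a single global parity constraint by summing every entry of the array. Since each of the $m$ rows sums to $0$ over the integers, the sum $S$ of all $ms$ entries is $0$. But the entries are by definition a half-set $L$ of $\{\pm 1,\dots,\pm ms\}$, so each entry equals $\varepsilon_k\,k$ for a distinct $k\in\{1,\dots,ms\}$ and some sign $\varepsilon_k\in\{+1,-1\}$. Thus
\[
0 \;=\; S \;=\; \sum_{k=1}^{ms} \varepsilon_k\,k.
\]

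Next I would reduce this identity modulo $2$. Because each $\varepsilon_k$ is odd, $\varepsilon_k k \equiv k \pmod 2$, so the signs are irrelevant mod $2$ and
\[
0 \;\equiv\; \sum_{k=1}^{ms} k \;=\; \frac{ms(ms+1)}{2} \pmod 2.
\]
Hence the triangular number $T_{ms}=ms(ms+1)/2$ must be even. Finally I would pin down exactly when $T_N$ is even by examining $N=ms$ modulo $4$: in the product of consecutive integers $N(N+1)$ exactly one factor is even, and $T_N$ is even precisely when that even factor is divisible by $4$, i.e. when $N\equiv 0\pmod 4$ or $N\equiv 3\pmod 4$. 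This yields $ms\equiv 0,3\pmod 4$, as claimed.

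The argument uses only the half-set property together with the vanishing of the row sums; the column sums and the orthogonality/sub-orthogonality structure play no role, which is why the same bound is symmetric in the roles of rows and columns. I therefore expect no genuine obstacle here: the single conceptual step is recognizing that the relevant invariant is the total sum of the entries reduced mod $2$, after which everything is an elementary case check on $ms \bmod 4$. The one point worth flagging in the write-up is that the conclusion truly depends on \emph{integer} rather than modular arithmetic — over $\mathbb Z_{2ms+1}$ the total sum vanishes automatically and this parity obstruction evaporates — so Lemma~\ref{necessary_integer} is really a constraint imposed by the stronger integer condition, and this is exactly the sense in which integer Heffter arrays are more rigid than ordinary ones.
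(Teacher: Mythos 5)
Your proof is correct and is essentially the paper's argument: the paper notes that each zero row sum forces an even number of odd entries, so the multiset $\{1,\dots,ms\}$ (up to sign) must contain an even number of odds, which is exactly your condition that $T_{ms}=ms(ms+1)/2\equiv 0\pmod 2$, since $T_{ms}\bmod 2$ counts the odd integers in $\{1,\dots,ms\}$. Your global reduction (summing all entries at once and working mod $2$) and the paper's row-by-row parity count are the same obstruction, and your closing remark that the constraint evaporates over $\mathbb{Z}_{2ms+1}$ correctly identifies why it is specific to the integer case.
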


\begin{proof} For a row to sum to 0 there must be an even number of odds. Hence the set of $ms$ entries must have an even number of odds, implying the congruence. \end{proof}

A third condition is also helpful in constructions. An integer Heffter array $H(m,n;s,t)$ is {\em shiftable} if each row and column contain the same number of positive as negative entries.  The array of Figure \ref{unbalanced} is integer and shiftable. Given a shiftable $H(m,n;s,t)$ with entries $a_{i,j}$, define $b_{i,j} = a_{i,j} + k$ if $a_{i,j} > 0$, and $b_{i,j} = a_{i,j} -k$ otherwise. The matrix $H \pm k$ whose entries are $b_{i,j}$ still has row and column sums 0 over the integers. Define the {\em support} of a matrix $A = \{a_{i,j}\}$ as $support(A) = \{|a_{i,j}|\}$. If the support of the original matrix is $\{1,\dots,ms\}$, then the support of the new matrix is $\{1+k,\dots,ms+k\}$.  

\begin{lemma}\label{necessary_shiftable} If a shiftable Heffter array $H(m,n;s,t)$ exists, then $s$ and $t$ are even both even, at least 4, and $ms \equiv 0 \pmod{4}$.\end{lemma}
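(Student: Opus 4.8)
The plan is to read off each of the three conclusions directly from the defining properties of a shiftable array, invoking Lemma \ref{necessary_integer} for the divisibility statement. The whole argument is a parity count, so I would organize it as three short deductions.

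First I would settle the parity of $s$ and $t$. By definition a shiftable array has the same number of positive as negative entries in every row, so each row contains an even number of filled cells. Since $H(m,n;s,t)$ has exactly $s$ entries per row, this forces $s$ to be even; applying the identical reasoning to the columns, which carry $t$ entries each, shows $t$ is even. Next, for the lower bounds I would simply combine this with the necessary conditions already recorded for the existence of any $H(m,n;s,t)$, namely $3 \le s \le m$ and $3 \le t \le n$. Since $s \ge 3$ and $s$ is even we get $s \ge 4$, and likewise $t \ge 4$.

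For the congruence $ms \equiv 0 \pmod 4$ I would note that a shiftable array is in particular an integer Heffter array, so Lemma \ref{necessary_integer} applies and gives $ms \equiv 0$ or $3 \pmod 4$. But $s$ is even, hence $ms$ is even, and the only even residue among $\{0,3\}$ modulo $4$ is $0$; therefore $ms \equiv 0 \pmod 4$. The step that needs the most care is precisely this last intersection of residue classes, together with checking that shiftability genuinely supplies the integer-array hypothesis required to quote Lemma \ref{necessary_integer} — everything else is immediate.

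If one preferred a self-contained argument avoiding the earlier lemma, I would instead reason directly: each row sums to zero over the integers, so each row contains an even number of odd entries, making the total number of odd entries even. Since the supports of the entries are exactly $\{1,\dots,ms\}$, this total equals $\lceil ms/2 \rceil$, and demanding both that $\lceil ms/2 \rceil$ be even and that $ms$ be even (the latter from $s$ even) once more pins down $ms \equiv 0 \pmod 4$. I do not anticipate any real obstacle here; the only pitfall is conflating the two uses of parity — the evenness of the \emph{number} of entries per line (which gives $s,t$ even) versus the evenness of the \emph{number of odd} entries per line (which drives the $\bmod\ 4$ count).
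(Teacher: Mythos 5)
Your proof is correct and takes essentially the same route as the paper: the equal counts of positive and negative entries in each row and column force $s$ and $t$ to be even, and Lemma \ref{necessary_integer} together with the evenness of $ms$ rules out the residue $3$, giving $ms \equiv 0 \pmod{4}$. You are in fact slightly more explicit than the paper's two-line proof, which leaves both the exclusion of $ms \equiv 3 \pmod{4}$ and the deduction $s,t \ge 4$ (from the general necessary condition $s,t \ge 3$ plus evenness) implicit.
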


\begin{proof} Since the number of positive and negative entries are the same in each row and column, $s$ and $t$ are both even. By Lemma \ref{necessary_integer} $ms \equiv 0,3 \pmod{4}$, so $ms \equiv 0 \pmod{4}$. \end{proof}

A natural question is the following, which we believe to be true.

\begin{conjecture}\label{big_conjecture} There exists a Heffter array $H(m,n;s,t)$ for all $m,n,s,t$ with $s,t \ge 3$ and $ms = nt$. If the conditions of Lemma \ref{necessary_integer} are satisfied, then there is an integer Heffter array. If the conditions of Lemma \ref{necessary_shiftable} are satisfied, then there is a shiftable array. \end{conjecture}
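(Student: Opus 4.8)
The plan is to use shiftable arrays --- the most rigid of the three kinds, but the only ones that compose cleanly --- as the structural engine, and to settle the remaining cases by direct and recursive number-theoretic constructions. I would begin with the shiftable case, since the shifting operation recorded just above the conjecture makes these arrays composable, and since a shiftable array is simultaneously integer and modular and hence settles the even-parameter instances of all three assertions at once. The key observation is that a shiftable $H(m,n;s,t)$ with support $\{1,\dots,ms\}$ can be translated to occupy any interval $\{k+1,\dots,k+ms\}$ while preserving every row and column sum, because equal numbers of positive and negative entries in each line make the shifts cancel. Hence two shiftable arrays sharing a common dimension can be placed in blocks with disjoint translated supports and juxtaposed: the distinct-up-to-sign condition becomes automatic and all line sums remain zero. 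The first goal is therefore to isolate a finite generating set of small shiftable arrays, one representative for each relevant residue class of $(s,t)$ allowed by Lemma \ref{necessary_shiftable}, from which every admissible such array can be assembled.

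For the integer case in which $s$ or $t$ is odd, shiftability is unavailable and the composition engine above does not apply directly. Here I would build recursively from genuinely integer base arrays respecting the constraint $ms \equiv 0,3 \pmod{4}$ of Lemma \ref{necessary_integer}. The natural base cases are the arrays with $s=3$ or $t=3$, whose short Heffter systems are exactly solutions to Heffter's First Difference Problem and are classically known to exist; these seed a ladder that raises $s$ and $t$ one admissible step at a time. The parity bookkeeping --- tracking the number of odd entries per line so that the integer sums stay zero as the parameters grow --- is the technical heart of this step.

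The purely modular statement is the most permissive, and I would treat it last, leaning on the integer results wherever the congruences permit and handling the leftover residues separately. Working modulo $2ms+1$ removes the parity obstruction entirely, since a part may now sum to zero through a wraparound relation rather than over $\mathbb{Z}$; this extra freedom lets me lift the cyclic $k$-cycle systems of Buratti and Del Fra, whose existence mod $2ms+1$ is guaranteed, into individual Heffter systems and then pair them.

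The hard part throughout is \emph{orthogonality}. Producing a single Heffter system $D(2ms+1,s)$ is comparatively routine, but an array demands two orthogonal systems, one for the rows and one for the columns, with each $s$-part meeting each $t$-part in exactly one cell. This intersection pattern is fragile: it need not survive either the block juxtaposition of the shiftable step or the recursive lifting of the integer step, so each composition lemma must be engineered to preserve it. I expect the genuine difficulty to concentrate in the low-parameter boundary cases --- $s$ or $t$ equal to $3$, or $ms$ small --- where no composition applies and a finite but delicate direct construction, verified to cover every residue class modulo $4$, will be required to anchor the induction.
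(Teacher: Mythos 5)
This statement is a \emph{conjecture}: the paper offers no proof of it, stating only that the no-empty-cells case (Conjecture \ref{full}) is believed settled in the then-unpublished \cite{AB} and that square cases are treated in \cite{ADDY}. So there is no in-paper argument to compare yours against, and the relevant question is whether your proposal constitutes a proof on its own. It does not: it is a research program, and every load-bearing step is asserted rather than established. You never exhibit the ``finite generating set of small shiftable arrays,'' never state (let alone prove) the composition lemma, never carry out the ``ladder'' for the odd-parameter integer cases, and your closing paragraph concedes that the low-parameter anchor cases remain to be done. To your credit, the shiftable-block strategy is genuinely the right engine --- it is essentially the method used in \cite{AB,ADDY} --- and the remark that translation by $k$ preserves line sums is exactly the paper's observation preceding Lemma \ref{necessary_shiftable}. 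But identifying the correct engine is not the same as running it: the entire content of the known constructions lies in producing the base blocks and verifying that their translated supports tile $\{1,\dots,ms\}$ exactly, which your sketch leaves untouched.

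Two specific steps would fail as written. First, your juxtaposition claim is underspecified in a way that matters: placing two shiftable arrays side by side with a common row count adds their row degrees, so an $m\times n_1$ block with $s_1$ entries per row next to an $m \times n_2$ block with $s_2$ entries per row yields parameters $(m,\,n_1+n_2;\,s_1+s_2,\,t)$ only when the column degrees also match, and arranging the four parameters $ms=nt$ while the supports partition the full interval is precisely the delicate bookkeeping you defer; moreover diagonal placement produces block diagonal arrays, and no composition of even-degree shiftable blocks can ever reach odd $s$ or $t$, so the entire integer case with $ms\equiv 3 \pmod 4$ and the general modular case are outside the engine's range. Second, your appeal to Buratti and Del Fra \cite{BD} goes the wrong way for the hard part: a cyclic $k$-cycle system does yield a single Heffter system $D(2ms+1,k)$, but the conjecture demands \emph{two} systems realized in a common array, and nothing in the cycle-system literature supplies the intersection structure. (Incidentally, your framing of orthogonality as the fragile condition is slightly off: once you build the array cellwise, sub-orthogonality of the row and column systems is automatic, since a row and a column share at most one cell. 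The real obstructions are the zero-sum conditions together with the support partition --- which is exactly where your sketch is silent.)
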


This conjecture is extensive since there are four parameters related by a single equation $ms = nt$. It is natural to examine special cases, such as {\em i)} when $H$ has no empty cells ($n=t$ and $m=s$), {\em ii)} squares ($n = m$), or {\em iii)} fixing $s$ and $t$. 

In the case the array has no empty cells Conjecture \ref{big_conjecture} simplifies to the following.

\begin{conjecture}\label{full} There exist Heffter arrays $H(m,n)$ for all $m,n \ge 3$. Moreover, they are integer Heffter arrays when $mn \equiv 0,3 \pmod{4}$, and are shiftable when $m$ and $n$ are both even and at least 4.\end{conjecture}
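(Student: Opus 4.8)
The plan is to separate the three assertions of Conjecture~\ref{full} according to the structural conditions they impose, and to build large arrays from small ones using two modulus-independent operations: the shift $H \pm k$ discussed above, and the \emph{juxtaposition} of full arrays. By juxtaposition I mean placing a full integer array $H(m,n_1)$ next to a full integer array $H(m,n_2)$ to form an $m \times (n_1+n_2)$ array: every new row is the concatenation of two zero-sum rows and so still sums to $0$, while every new column lies entirely in one block and already sums to $0$. Stacking full arrays that share a column count is the transpose of this. Because integer row and column sums are unaffected by the modulus $2mn+1$, these operations compose cleanly for integer arrays; the only thing to arrange is that the two supports partition $\{1,\dots,m(n_1+n_2)\}$, which I would achieve by shifting the second block by $mn_1$. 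Since shifting preserves integer sums precisely for shiftable arrays, as observed above, the support bookkeeping is free exactly in the even case and must be done by hand otherwise.

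For the shiftable assertion (both $m,n$ even and at least $4$) this reduces the problem to a finite check. First I would exhibit explicit shiftable integer arrays $H(4,4)$, $H(4,6)$ and $H(6,6)$, taking $H(6,4)$ as the transpose of $H(4,6)$. Since every even integer at least $4$ is a nonnegative combination of $4$ and $6$, any even $m \times n$ with $m,n \ge 4$ can then be assembled by juxtaposing shifted copies of these four blocks: first build each horizontal band $H(4,n)$ and $H(6,n)$ by horizontal juxtaposition, then stack these bands. Shiftability is preserved at every step because juxtaposition and shifting both keep the positive and negative counts balanced in each line.

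The integer assertion with $mn \equiv 0,3 \pmod 4$ but $m$ or $n$ odd is harder, because shiftability is available only along a dimension whose transverse lines are even. Concretely, for $m$ even and $n$ odd one can grow $n$ by appending shiftable blocks $H(m,\text{even})$ (whose columns have even length $m$), but one cannot grow $m$ this way, since a block to be stacked would need balanced rows of odd length $n$. I would therefore fix the odd dimension at a small residue representative and construct explicit one-parameter families along the even dimension, subdividing by $m \bmod 4$ to control the global sign pattern so that every line sums to $0$ over $\mathbb{Z}$ when $mn \equiv 0 \pmod 4$, matching Lemma~\ref{necessary_integer}. The both-odd case $mn \equiv 3 \pmod 4$ admits no shiftable leverage in either direction and would need a genuinely two-parameter diagonal construction in the spirit of the classical difference methods for cyclic cycle systems. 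Finally, when $mn \equiv 1,2 \pmod 4$ Lemma~\ref{necessary_integer} forbids integer arrays, so only the modular assertion remains; here I would start from an integer-style array whose lines sum to $\pm(2mn+1)$ rather than to $0$ over $\mathbb{Z}$ and verify that the sums vanish modulo $2mn+1$, accepting that juxtaposition no longer composes because the modulus changes with the array size.

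The hard part will be exactly these last two families, the both-odd integer case and the modular-only cases, where shiftability gives no purchase and the constructions must be produced directly and checked residue class by residue class. The delicacy is that the modulus $2mn+1$ couples the two parameters, so a construction tuned for one residue of $m$ need not extend to the next; controlling simultaneously the support (that it is exactly $\{1,\dots,mn\}$ up to sign) and all of the row and column sums across every residue is what resists a uniform argument and is, I expect, the reason the statement is posed as a conjecture rather than proved.
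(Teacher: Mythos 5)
The statement you are asked to prove is posed in the paper as a conjecture, not a theorem: the paper contains no proof of it at all, and explicitly defers the claimed proof to a separate paper in preparation \cite{AB}, with the square case treated in \cite{ADDY}. So there is no in-paper argument to match yours against, and your proposal must be judged on its own terms --- where it is a research program rather than a proof. To your credit, the program is the right one: the shift operation $H \pm k$ on shiftable arrays together with juxtaposition of blocks (shifting the second block's support to $\{mn_1+1,\dots,m(n_1+n_2)\}$) is exactly the composition machinery the paper sets up in Section \ref{conclusion}, and reducing the all-even case to explicit shiftable base arrays $H(4,4)$, $H(4,6)$, $H(6,6)$ via the numerical fact that every even integer $\ge 4$ is a nonnegative combination of $4$ and $6$ is the standard and workable route. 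But you never exhibit those base arrays, so even the case you call a ``finite check'' is unfinished.

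The genuine gaps are the ones you yourself name and then do not close: the both-odd case ($mn \equiv 3 \pmod 4$) and the modular-only cases ($mn \equiv 1,2 \pmod 4$) are left with no construction whatsoever, only the remark that they ``must be produced directly.'' Those cases are not a fringe residue class --- they comprise the bulk of the conjecture, and a proof sketch that terminates in ``I expect \dots\ the reason the statement is posed as a conjecture'' is an honest assessment, not an argument. There is also a quantitative slip in the mixed case you do outline: juxtaposition can only grow a dimension by even amounts $\ge 4$, since a width-2 block $H(m,2)$ cannot exist (each length-2 row would be $\{a,-a\}$, repeating an absolute value and violating the half-set condition); hence from a base with odd dimension $3$ you reach $7,9,11,\dots$ but never $5$, so you need base families for at least two odd values --- and each such ``base'' $H(m,3)$ or $H(m,5)$ for \emph{all} even $m$ is itself an infinite one-parameter direct construction, not a small explicit array. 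In short: your skeleton agrees with the approach the paper's authors allude to for \cite{AB}, but the flesh --- explicit base blocks, the odd-dimension integer families, and everything in the non-integer residue classes --- is missing, and those are precisely where the work lies.
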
 

The author with Tom Boothby and Jeff Dinitz believe we have a proof of Conjecture \ref{full}. We are writing up the details for publication in \cite{AB} and have a computer program for their construction. Another subsequent paper \cite{ADDY} examines integer square Heffter arrays including a complete charaterization of shiftable arrays.

In our definition of a Heffter array we require at most one appearance of elements in $\{x,-x\}$. What if we allowed two appearances either the same or different, or a multiset of $\lambda$ such elements. Using the difference-set construction this gives rise to $\lambda$-fold cycle systems. Let $K_n^\lambda$ be the complete multigraph on $n$ vertices where every pair of edges is jointed by $\lambda$ edges in parallel. The analogue of a Heffter square with higher $\lambda$ gives an embedding of $K_n^\lambda$ into a surface that is face 2-colorable, each color class being a $\lambda$-fold cycle system. For example, \cite{DGLM} uses current graphs to constuct 2-fold embeddings with all faces triangles. 

Recall that we need a simple ordering on the rows and one on the columns to ensure the resulting face boundaries are simple cycles. Moreover these two orderings should be compatable. These seem easy to find in practice, but hard to prove their existence in general. Alspach conjectures:

\begin{conjecture}\label{ordering} Every $A \subset \mathbb {Z}_n \setminus \{0\}$ has a simple ordering, i.e., $A$ can be ordered so that the partial sums are all distinct.  \end{conjecture}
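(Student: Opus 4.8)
The plan is to prove Alspach's conjecture, which asserts that every subset $A \subseteq \mathbb{Z}_n \setminus \{0\}$ admits a simple ordering, meaning an ordering whose partial sums $s_1, s_2, \dots, s_{|A|}$ are pairwise distinct. I would begin by isolating the two genuinely distinct regimes: the case where $A$ does \emph{not} sum to $0$ in $\mathbb{Z}_n$, and the case where it does. When $\sum_{a \in A} a \not\equiv 0$, the final partial sum $s_{|A|}$ is nonzero, and the difficulty is only to avoid collisions among earlier partial sums; when the total sum \emph{is} $0$, we have $s_0 = 0 = s_{|A|}$ automatically distinct as a cyclic pair, and the condition is exactly the ``no consecutive subsequence sums to $0$'' reformulation quoted in the excerpt (this is the relevant case for Heffter arrays). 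I expect the sum-zero case to be the essential one, so I would focus there and handle the nonzero case either by a reduction or by adjoining a formal extra element.

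First I would set up a greedy / extension framework: order $A$ one element at a time, maintaining the invariant that all partial sums built so far are distinct. Suppose $a_1, \dots, a_j$ have been placed with distinct partial sums $0 = s_0, s_1, \dots, s_j$. I want to choose the next element $a_{j+1}$ from the remaining set $R$ so that $s_{j+1} = s_j + a_{j+1}$ avoids the forbidden values $\{s_0, \dots, s_j\}$. The obstruction is that every remaining $a \in R$ might satisfy $s_j + a \in \{s_0, \dots, s_j\}$; a naive count shows there are $|R|$ candidates and at most $j+1 = (|A| - |R|) + 1$ forbidden targets, so when $|R|$ is large relative to $|A|$ the greedy step always succeeds, and the trouble is confined to the last few placements. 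The key step, therefore, is a careful endgame analysis or a swapping argument: when a greedy extension is blocked near the end, exchange the last-placed element with a remaining one, or reverse a suffix, to escape the obstruction. A clean way to formalize this is to phase the argument so that the final element placed is \emph{prescribed} (for instance, in the sum-zero case, ensure $s_{|A|} = 0$ is only reached on the very last step), which decouples the closing constraint from the intermediate ones.

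The hard part will be that a pure greedy argument does not in general close the last one or two steps, and Alspach's conjecture is in fact known to be \emph{open} in full generality — it has been established for $|A| \le n-3$ or thereabouts and for various special structures, but not for all subsets. So rather than claim a complete elementary proof, the honest plan is to prove the conjecture in the ranges and special cases needed for the Heffter-array application and to organize the argument so the remaining obstruction is explicitly localized. Concretely, I would prove: (i) the greedy step succeeds whenever $|R| > j+1$, disposing of all but the final roughly $|A|/2$ placements; (ii) for the tail, reduce to avoiding a small number of ``bad'' orderings by a local exchange between adjacent or endpoint elements, using the fact that swapping $a_i$ and $a_{i+1}$ changes only $s_i$ (to $s_{i-1} + a_{i+1}$) and leaves all other partial sums fixed, so a single collision can be repaired by one transposition unless it is part of a rigid cluster; and (iii) handle those rigid clusters by the pairing structure of $\mathbb{Z}_n$ (grouping $a$ with $-a$, or with $n - s$ for appropriate shifts). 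I anticipate step (ii)'s rigid clusters are where the genuine mathematical content and the main obstacle lie, and where, for the fully general statement, one must either invoke the known partial results on the conjecture or restrict to the parameter regimes that the constructions in this paper actually require.
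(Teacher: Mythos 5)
You have correctly diagnosed the most important fact: this statement is Alspach's conjecture, and the paper does not prove it --- indeed it cannot, since the conjecture was (and remains) open. The paper only records partial progress: verification for $n \le 25$ in the cited work of Archdeacon, Dinitz, and Stinson \cite{ADS}, and the case $|A| = n-1$ due to Bode and Harborth. So there is no paper proof to compare against, and your honesty in refusing to claim a complete argument is the right call. That said, as a proof attempt your proposal has a genuine and concretely locatable gap, which you yourself flag but do not close: everything after your step (i) is a program, not an argument. The greedy count is fine (in fact slightly better than you state: since $a \neq 0$ forces $s_{j+1} \neq s_j$, the forbidden set has size $j$, not $j+1$, so greedy succeeds while $|R| > j$), but it only handles roughly the first half of the placements, and the entire mathematical content of the conjecture lives in the tail. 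Your step (ii) relies on the observation that transposing $a_i$ and $a_{i+1}$ changes only $s_i$ --- true, but repairing one collision this way can create a new collision at the new value of $s_i$, and you give no invariant or potential function showing the repair process terminates; ``rigid cluster'' is never defined, and step (iii)'s appeal to the pairing $\{a,-a\}$ is a hope rather than a mechanism. No known technique resolves this endgame in general, which is precisely why the conjecture is open.

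One factual correction: your claim that the conjecture ``has been established for $|A| \le n-3$ or thereabouts'' overstates what was known. At the time of this paper the established cases were small cardinalities ($|A| \le 6$ or so in \cite{ADS}), small $n$ ($n \le 25$, by computer), and the single extreme case $|A| = n-1$; results near $|A| = n-2, n-3$ came later and, where they exist, typically require $n$ prime. Your fallback position --- prove only the instances needed for the Heffter-array application --- is sensible and is essentially what the surrounding literature does (note that any ordering of a set of size at most $5$ is automatically simple, which is why small-degree current graphs avoid the issue entirely), but it is a different theorem from the one stated, and your proposal does not identify which parameter regimes the constructions of this paper actually require, so even the restricted goal is left unproved.
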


The author together with Jeff Dinitz and Doug Stinson have made some progress \cite{ADS}, including verifying Conjecture \ref{ordering} for $n \le 25$. Bode and Harborth showed it was true for $|A| = n-1$. The general conjecture remains open.  


\bigskip \bigskip
\noindent {\bf Acknowledgements:} The author thanks Tom Boothby, Melanie Brown, Jeff Dinitz, Diane Donovan, Mike Grannell, Terry Griggs, Thomas McCourt, Doug Stinson, Greg Warrington, Sule Yazici, and others for helpful discussions.



\end{document}